\newtheorem{thm}{Theorem}[section]
\newtheorem{cor}[thm]{Corollary}
\newtheorem{ques}[thm]{Question}
\theoremstyle{definition}
\newtheorem{defn}[thm]{Definition}
\newtheorem{rem}[thm]{Remark}
\newtheorem{exam}[thm]{Example}
\numberwithin{equation}{section}
\begin{document}
\title[On Frink's metrization technique and applications]{On Frink's metrization technique and applications}

\author{Nguyen Van Dung}

\address[]{Faculty of Mathematics and Information Technology Teacher Education, Dong Thap University, Cao Lanh City, Dong Thap Province, Viet Nam}

\email{nvdung@dthu.edu.vn}

\author{Tran Van An}

\address{Department of Mathematics, Vinh University, Vinh City,
	Nghe An, Viet Nam}

\email{andhv@yahoo.com}

\subjclass[2000]{Primary 54E35; Secondary 54H25}%
\keywords{metrization; quasi-metric; $b$-metric; 2-generalized metric; fixed point}


\begin{abstract}  In this paper, we give a simple counterexample to show again the limits of Frink's~construction~\cite[page~134]{AHF1937} and then use Frink's metrization technique to answer two conjectures posed by Berinde and Choban~\cite{bC2013-3}, and to calculate corresponding metrics induced by some $b$-metrics known in the literature. We also use that technique to prove a metrization theorem for 2-generalized metric spaces, and to deduce the Banach contraction principle in $b$-metric spaces and 2-generalized metric spaces from that in metric spaces.
\end{abstract}

\maketitle

 \section{Introduction and preliminaries} \label{sec:1}

The metrization problem is concerned with conditions under which a topological space $X$ is metrizable~\cite{eWC1927}, where for a function $d: X \times X \longrightarrow [0, \infty)$ satisfying some axioms and generating a topology $\mathcal{T}$ on $X$, and for a metric $D: X \times X \longrightarrow [0,\infty)$, the topological space $(X,\mathcal{T})$ is called \textit{metrizable} by the metric $d$ if $\mathcal{T}$ and the metric topology induced by $d$ coincide. Recall that a space $X$ is a \emph{metric space} if there exists a \emph{metric} $D: X \times X \longrightarrow [0, +\infty )$ that satisfies the following conditions for all $x,y,z \in X$.
\begin{enumerate} \item[I.] \label{185-99} $D(x,y) = 0$ if and only if $x =y$.
	
	\item[II.]\label{185-98} The symmetry: $D(x,y) = D(y,x)$.
	
	\item[III.] \label{185-97} The triangle inequality: $D(x,z) \le D(x,y) + D(y,z)$. 
\end{enumerate}
Some generalizations of the triangle inequality~\hyperlink{185-97}{$\mathrm{(III)}$} were introduced such as
\begin{enumerate} 
	\item[IV.] \label{185-96} The generalized triangle inequality: If $D(x,y) <\varepsilon$ and $D(y,z) <\varepsilon$ then $D(x,z) <2 \varepsilon$.
	
	\item[V.] \label{185-95} The uniform regular property: For every $\varepsilon>0$ there exists $\phi(\varepsilon) >0$ such that if $D(x,y) <\phi(\varepsilon)$ and $D(y,z) <\phi(\varepsilon)$ then $D(x,z) \le \varepsilon$.
\end{enumerate}

In 1993 Czerwik~\cite{SC1993} introduced the notion of a \textit{$b$-metric} with a coefficient 2. This notion was generalized later with a coefficient $K \ge 1$~\cite{SC1998}. In~2010 Khamsi and Hussain~\cite{KH2010} reintroduced the notion of a $b$-metric under the name \emph{metric-type}. Another notion of metric-type, called \emph{$s$-relaxed$_p$ metric} was introduced in \cite[Definition~4.2]{FKS2003}, see also~\cite{MAK2010}. A $b$-metric is called \textit{quasi-metric} in~\cite{mS1979}. Quasi-metric spaces play an important role in the study of Gromov hyperbolic metric spaces \cite[Final remarks]{vS2006}, and in the study of optimal transport paths~\cite{qX2009}. For convenience the names \textit{$b$-metric} and \textit{$b$-metric space} will be used in what follows. 
It is clear that condition~\hyperlink{185-95}{$\mathrm{(V)}$} reduces to~\hyperlink{185-96}{$\mathrm{(IV)}$} if $\phi(\varepsilon) = \frac{ \varepsilon }{2}$, and every $b$-metric space $(X,D,K)$ is a space with the distance function $D$ satisfying~\hyperlink{185-99}{$\mathrm{(I)}$}, \hyperlink{185-98}{$\mathrm{(II)}$} and~\hyperlink{185-95}{$\mathrm{(V)}$} with $\phi( \varepsilon) = \frac{\varepsilon}{2K}.$

Recall that a \textit{distance space} is a pair $(X,D)$ consisting of a set $X$ and a function $D: X \times X \longrightarrow [0,\infty)$ satisfying $D(x,y) + D(y,x) = 0$ if and only if $x = y$ \cite[Definition~2.1]{bC2013-3}.
Note that the convergence in a distance space $(X,D)$ is defined  by the usual way, that is, $\lim \limits_{n \rightarrow \infty} x_n = x$ if $\lim \limits_{n \rightarrow \infty} D(x_n,x) = 0 = \lim \limits_{n \rightarrow \infty} D(x,x_n).$ Similarly, a sequence $\{x_n\}$ is called \textit{Cauchy} if $\lim \limits_{n,m \rightarrow \infty} D(x_n,x_m) = 0.$ The convergence in $(X,D)$ generates a topology $\mathcal{T}$, called the \textit{sequential topology} on $(X,D)$, in the sense of Franklin~\cite[page~108]{SPF1965}: a subset $U$ is called \textit{open} in $(X,\mathcal{T})$ if for each $x \in U$ and $\lim \limits_{n \rightarrow \infty} x_n = x$ in $(X,D)$ there exists $n_0$ such that $x_n \in U$ for all $n \ge n_0$.
For each $x \in X$ and each $r > 0$ the set $$ B(x,r,D) = \left\{y \in X: D(x,y) <r\right\} $$
is called a \textit{ball with center $x$ and radius $r$}. There is another topology $\mathcal{T}(D)$ on $(X,D)$: a subset $U$ of $X$ is called \textit{open} if for each $x \in U$ there exists $r_x>0$ such that $B(x,r_x,D) \subset U$. The topology $\mathcal{T}(D)$ is called the \textit{topology induced by the distance} $D$, see also \cite[Definition~2.1]{bC2013-3}. As in the proof of \cite[Proposition~3.3.(1)]{aTD2015}, $\mathcal{T}(D)$ is exactly the topology $\mathcal{T}$ provided $D$ is  symmetric.

In~1917 Chittenden~\cite{eWC1917} showed that a space with a distance function satisfying~\hyperlink{185-99}{$\mathrm{(I)}$}, \hyperlink{185-98}{$\mathrm{(II)}$} and~\hyperlink{185-95}{$\mathrm{(V)}$}, that was also called a \textit{$CF$-metric space} \cite[Definition~3.2]{bC2013-3}, is metrizable. Consequently, every $b$-metric space is metrizable \cite[page~114]{KS2014-8}. Chittenden's proof was somewhat long and complicated and, although the existence of
a distance function satisfying~\hyperlink{185-97}{$\mathrm{(III)}$} is proved, it is not defined directly in terms of the original distance function satisfying~\hyperlink{185-95}{$\mathrm{(V)}$}. In 1937 Frink~\cite[page~133]{AHF1937} presented a simple and direct
proof of the fact that a topological space with a distance function satisfying~\hyperlink{185-99}{$\mathrm{(I)}$}, \hyperlink{185-98}{$\mathrm{(II)}$} and~\hyperlink{185-96}{$\mathrm{(IV)}$}, and also ~\hyperlink{185-95}{$\mathrm{(V)}$}, is metrizable without relying on Chittenden's theorem. Frink's metrization technique is also called the \textit{chain approach}.

Frink's metrization technique impacted many results. 
In~1998 Aimar \emph{et al.}~\cite{aIN1998} improved Frink's metrization technique to give a direct proof of a theorem of Mac\'ias and Segovia in~\cite{mS1979} on the metrization of a $b$-metric space $(X,D,K)$. In 2006 Schroeder showed some limits of Frink's construction, by providing a counterexample of a $b$-metric space $(X,D,K)$, for which the function $d$ defined by Frink's metrization technique (see~\eqref{183-13} below) is not a metric \cite[Example~2]{vS2006}. In~2009 Paluszy\'{n}ski and Stempak~\cite{pS2009} also improved Frink's metrization technique to produce a metric $d$ from a given $b$-metric space $(X, D)$. 

In~2000 Branciari \cite{AB2000} introduced a notion of a $\nu$-generalized metric space. This notion was studied by many authors, see~\cite{KK2013-2}, \cite{KS2013} and the references given there. Some authors constructed functions that are 2-generalized metrics but are not metrics \cite[3.~Example]{AB2000}, \cite[Examples~1 \& 2]{DD2009-4}, \cite[Example~1]{KS2014-6}, and stated many fixed point theorems in $\nu$-generalized metric spaces. However, the metrization of $\nu$-generalized metric spaces was rarely studied. Recently a sufficient condition for $\nu$-generalized metric spaces to be metrizable was proved \cite[Corollary~2.6]{KD2014}. 

Many authors transferred results from metric spaces to $b$-metric spaces and other generalized metric spaces~\cite{ADKR2015}, \cite{bBP2010}, \cite{PR2006}.  However, it is necessary to work carefully in generalized metric fixed point theory, since various fixed point theorems in generalized metric spaces, except for $b$-metric spaces and  $\nu$-generalized metric spaces, can be deduced from the corresponding fixed point theorems in  metric spaces  \cite[4. Conclusions]{ADKR2015}, \cite{MAK2015-2}.
In~2013 Berinde and Choban~\cite{bC2013-3} presented a similar situation in the case of $b$-metric spaces. They asserted that working in $b$-metric spaces $(X,D,K)$ makes sense since the associate metric $d$ given by~\eqref{183-13} is not always a metric \cite[page~28]{bC2013-3}. Berinde and Choban introduced the notion of an $F$-distance space and proposed some conjectures. Note that there were some typos in \cite[Conjecture 6.2]{bC2013-3} that make a misunderstanding in the conjecture. By a private communication with the corresponding author of that paper the conjecture is restated as follows.

\begin{ques}[\cite{bC2013-3}, Conjecture 6.1] \label{185-00} Let $(X,\rho)$ be an $F$-distance space and $T:X \longrightarrow X$ be a map such that $\rho(Tx,Ty) \le \lambda \rho(x,y)$ for some $ \lambda \in [0,1)$ and all $x,y \in X$. 
	\begin{enumerate}
		\item Is the sequence $ \left\{x_n\right\}$ Cauchy, where $x_{n+1} = Tx_n$ for all $n \in \mathbb{N}$ and some $x_1 \in X$?
		\item Does there exist a unique fixed point of $T$ if the space $(X,\rho)$ is complete?
	\end{enumerate}
\end{ques}

\begin{ques}[\cite{bC2013-3}, Conjecture 6.2] \label{con6.2} Let $(X,\rho)$ be a symmetric distance space, $(X, \mathcal{T}(\rho))$ be Hausdorff compact, and $T:X \longrightarrow X$ be a map such that $\rho(Tx,Ty) \le \lambda \rho(x,y)$ for some $ \lambda \in [0,1)$ and all $x,y \in X$. Does there exist a unique fixed point of~$T$?
\end{ques}

In this paper, we are interested in studying Frink's metrization technique. In Section~\ref{2} we construct a simple counterexample to show again the limits of Frink's~construction~\cite[page~134]{AHF1937}. In Section~\ref{3} we show that the Banach contraction principle in $b$-metric spaces can be deduced from the Banach contraction principle in metric spaces and then calculate corresponding metrics induced by some $b$-metrics known in the literature. In Section~\ref{4} we give answers to Question~\ref{185-00} and Question~\ref{con6.2}. In Section~\ref{5} we prove a metrization condition for \mbox{$2$-generalized} metric spaces and show that the Banach contraction principle in \mbox{$2$-generalized} metric spaces can be deduced from the Banach contraction principle in metric spaces.

Now we recall notions and properties which are useful in what follows. 

\begin{defn}[\cite{SC1998}] \label{159-81} Let $X$ be a nonempty set, $K\ge 1$ and $D: X \times X \longrightarrow [0,+\infty) $ be a function such that for all $x,y,z \in X$,
	\begin{enumerate} \item $D (x,y) = 0$ if and only if $x =y$.
		
		\item $D(x,y) = D(y,x)$.

		\item \label{159-81-3} $D(x,z) \le K \left [ D(x,y) + D(y,z) \right ]$.
	\end{enumerate}
	Then $D$ is called a \emph{$b$-metric} on $X$ and $(X,D,K)$ is called a \emph{$b$-metric space}.
\end{defn}

\begin{thm}[\cite{AHF1937}, pages 134-135] \label{183-14} Let $(X, D) $ be a space satisfying~\hyperlink{185-99}{$\mathrm{(I)}$}, \hyperlink{185-98}{$\mathrm{(II)}$} and~\hyperlink{185-96}{$\mathrm{(IV)}$}.
	For any $x,y \in X$, define
	\begin{equation} \label{183-13} d(x,y) = \inf \left\{\sum\limits_{i=1}^nD(x_i,x_{i+1}): x_1=x,x_2, \ldots, x_{n+1}=y \in X,n \in \mathbb{N} \right\}.
	\end{equation}
	Then
	\begin{enumerate} \item \label{183-14-0} For all $x,x_1, \ldots,x_n,y \in X$,
		\begin{equation} \label{183-11} D(x,y) \le 2 D(x,x_1) +4D (x_1,x_2) + \ldots + 4D (x_{n-1},x_n) + 2D(x_n,y).
		\end{equation}
		
		\item \label{183-14-1} $d$ is a metric on $X$.
		
		\item \label{183-14-2} For all $x,y \in X$, 
		\begin{equation} \frac{D(x,y)}{4} \le d(x,y) \le D(x,y). 
		\label{eq:99} \end{equation} In particular,
		\begin{enumerate} \item $\lim\limits_{n \rightarrow \infty} x_n = x$ in $(X,D)$ if and only if $\lim\limits_{n \rightarrow \infty} x_n = x$ in $(X,d)$. 
			
			\item A sequence $\{x_n\}$ is Cauchy in $(X,D)$ if and only if it is Cauchy in $(X,d)$. 
			
			\item The distance space $(X,D)$ is metrizable by the metric $d$.
		\end{enumerate}
	\end{enumerate}
\end{thm}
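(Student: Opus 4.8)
The plan is to prove the three numbered claims in order, since each one feeds into the next. The core of the argument is the chain inequality \eqref{183-11}, which I would establish first by induction on $n$. The base case $n=0$ (i.e.\ $D(x,y)\le 2D(x,y)$) is trivial, and the case $n=1$ reads $D(x,y)\le 2D(x,x_1)+2D(x_1,y)$, which follows from~\hyperlink{185-96}{$\mathrm{(IV)}$}: if $\varepsilon$ is the larger of $D(x,x_1)$ and $D(x_1,y)$ (strictly, $\varepsilon$ slightly larger), then $D(x,y)<2\varepsilon$, giving the bound after letting $\varepsilon$ shrink to the max. For the inductive step, given a chain $x,x_1,\dots,x_n,y$, I would split it at the point where the partial sum $D(x,x_1)+\dots$ first reaches half of the total chain-weight $S$; say this happens at index $k$. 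Then the head $x,\dots,x_k$ has weight $\le S/2$ and the tail $x_k,\dots,y$ has weight $\le S/2$. Applying the induction hypothesis to each piece bounds $D(x,x_k)$ and $D(x_k,y)$ by $4\cdot(S/2)=2S$ — but one needs to be careful to get the sharper endpoint coefficients $2$ rather than $4$; the standard Frink bookkeeping is to show $D(x,x_k)\le 2(\text{head weight})+2(\text{the single edge }D(x_{k-1},x_k))$ or similar, and then combine via~\hyperlink{185-96}{$\mathrm{(IV)}$} applied to $x,x_k,y$. This is the step I expect to be the main obstacle: the constants must be tracked exactly so that the outer $2$'s and inner $4$'s come out right, and the choice of splitting index has to be made so that both halves genuinely have weight at most $S/2$, which is why one takes the \emph{first} index where the running sum crosses the threshold and handles the straddling edge separately.

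Once \eqref{183-11} is in hand, claim~\eqref{183-14-1} that $d$ is a metric is routine. Symmetry of $d$ follows from symmetry of $D$ by reversing chains. The triangle inequality $d(x,z)\le d(x,y)+d(y,z)$ is immediate from the definition: concatenating a near-optimal chain from $x$ to $y$ with one from $y$ to $z$ yields a chain from $x$ to $z$ whose total weight is within $\varepsilon$ of $d(x,y)+d(y,z)$. The only substantive point is that $d(x,y)=0\iff x=y$: the direction $x=y\Rightarrow d=0$ is clear (use the one-term chain, or the constant chain), and the converse is exactly where \eqref{183-11} is needed — if $d(x,y)=0$ then there are chains from $x$ to $y$ of arbitrarily small total weight, and \eqref{183-11} (with all the $x_i$ the chain vertices) forces $D(x,y)$ to be bounded by $4$ times that weight, hence $D(x,y)=0$, hence $x=y$ by~\hyperlink{185-99}{$\mathrm{(I)}$}.

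For claim~\eqref{183-14-2}, the upper bound $d(x,y)\le D(x,y)$ is immediate from the definition by taking the trivial two-point chain $x,y$. The lower bound $D(x,y)/4\le d(x,y)$ is again a direct consequence of \eqref{183-11}: for any chain $x=x_1,\dots,x_{n+1}=y$ we get $D(x,y)\le 4\sum_{i=1}^n D(x_i,x_{i+1})$ (absorbing the endpoint coefficients $2$ into $4$), and taking the infimum over chains gives $D(x,y)\le 4\,d(x,y)$. The three topological consequences (a), (b), (c) then follow formally from the two-sided estimate \eqref{eq:99}: since $d$ and $D$ are comparable up to the constant $4$, a sequence converges (resp.\ is Cauchy) with respect to one iff it does with respect to the other, and because $D$ is symmetric the sequential topology $\mathcal T$ on $(X,D)$ coincides with $\mathcal T(D)$ (as noted in the preliminaries, citing \cite[Proposition~3.3.(1)]{aTD2015}), which in turn coincides with the metric topology of $d$ by \eqref{eq:99}; hence $(X,D)$ is metrizable by $d$.
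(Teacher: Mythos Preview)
The paper does not supply its own proof of this theorem: it is stated with a citation to Frink~\cite[pages~134--135]{AHF1937} and used as a black box throughout. So there is nothing in the paper to compare your argument against; your outline is in fact Frink's original strategy (chain inequality by induction with a splitting index, then positivity of $d$ and the two-sided bound as corollaries).

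That said, your description of the inductive step contains a slip that would bite if you wrote it out. You say: split at the first index $k$ where the running sum reaches $S/2$, so that ``the head $x,\dots,x_k$ has weight $\le S/2$ and the tail $x_k,\dots,y$ has weight $\le S/2$.'' But by definition of ``first reaches,'' the head up to $x_k$ has weight $\ge S/2$, not $\le S/2$; it is the head up to $x_{k-1}$ that has weight $<S/2$, while the tail from $x_k$ onward has weight $\le S/2$. The edge $D(x_{k-1},x_k)$ straddles the midpoint and must be isolated. Frink's actual bookkeeping applies the induction hypothesis to the chain $x,\dots,x_{k-1}$ and to the chain $x_{k+1},\dots,y$ (both of weight $\le S/2$), bounds the single edge $D(x_{k-1},x_k)$ trivially by $S$, and then uses~\hyperlink{185-96}{$\mathrm{(IV)}$} twice on the three resulting pieces. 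Getting the endpoint coefficients to come out as $2$ rather than $4$ also requires choosing the split symmetrically (or arguing via which edge is maximal, as one sees already in the $n=2$ case). You flag this as ``the main obstacle'' and defer to ``standard Frink bookkeeping,'' which is honest, but the specific splitting you wrote down would not close the induction as stated. The rest of your outline (parts~\eqref{183-14-1} and~\eqref{183-14-2} and the topological consequences) is correct and matches how the paper uses the result.
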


\begin{thm}[\cite{AHF1937}, page 135] \label{183-141} Let $(X, \delta) $ be a space satisfying~\hyperlink{185-99}{$\mathrm{(I)}$}, \hyperlink{185-98}{$\mathrm{(II)}$} and~\hyperlink{185-95}{$\mathrm{(V)}$}. For all $\varepsilon \ge 0$, put $\psi(\varepsilon) = \min \{\phi(\varepsilon), \frac{\varepsilon}{2}\}$, and put $$ r_1 = 1, \ldots, r_{n+1} = \psi(r_n), \ldots$$ and for all $x,y \in X$, define
	$$ D(x,y) = \begin{cases} 1 &\mbox{ if } D(x,y) \ge r_1 \\ \frac{1}{2^n} & \mbox{ if } r_n > D(x,y) \ge r_{n+1}. \end{cases} $$
	Then
	\begin{enumerate} \item \label{183-141-0} The distance space $(X,D)$ satisfies ~\hyperlink{185-99}{$\mathrm{(I)}$}, \hyperlink{185-98}{$\mathrm{(II)}$} and~\hyperlink{185-96}{$\mathrm{(IV)}$}.
		
		\item \label{183-141-1} $\lim\limits_{n \rightarrow \infty} x_n = x$ in $(X,\delta)$ if and only if $\lim\limits_{n \rightarrow \infty} x_n = x$ in $(X,D)$. In particular, the distance space $(X, \delta)$ is metrizable by the metric $d$ defined as in~\eqref{183-13}.
	\end{enumerate}
\end{thm}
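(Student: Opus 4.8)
The plan is to carry out Frink's argument in three moves: (a) record the elementary properties of the sequence $(r_n)$ and the exact relation between $D$ and $\delta$; (b) use these to verify part~\ref{183-141-0}, that $D$ satisfies~\hyperlink{185-99}{$\mathrm{(I)}$}, \hyperlink{185-98}{$\mathrm{(II)}$} and~\hyperlink{185-96}{$\mathrm{(IV)}$}; and (c) deduce part~\ref{183-141-1} by combining a convergence equivalence between $\delta$ and $D$ with Theorem~\ref{183-14} applied to $D$. For (a): since $\phi(\varepsilon)>0$ for $\varepsilon>0$, also $\psi(\varepsilon)>0$ and $\psi(\varepsilon)\le\varepsilon/2$, so $0<r_{n+1}\le r_n/2$; hence $(r_n)$ is strictly decreasing, $r_n\le 2^{1-n}$, and $r_n\downarrow0$. (The definition of $D$ is read with the tacit convention $D(x,y)=0$ when $\delta(x,y)=0$, i.e.\ when $x=y$.) The ``dictionary'' I would establish first is that, for every $n\ge1$,
\begin{equation*}
D(x,y)\le 2^{-n}\iff\delta(x,y)<r_n,\qquad\text{and}\qquad D(x,y)=0\iff x=y ,
\end{equation*}
both of which follow at once from the definition of $D$ and the monotonicity of $(r_n)$ (if $D(x,y)=2^{-k}$ then $r_{k+1}\le\delta(x,y)<r_k$, and a positive value $\delta(x,y)<r_n$ lies in a unique half-open interval $[r_{k+1},r_k)$ with $k\ge n$); in particular $D(x,y)<2^{-n}\iff\delta(x,y)<r_{n+1}$.

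For (b): \hyperlink{185-99}{$\mathrm{(I)}$} is exactly the dictionary, and \hyperlink{185-98}{$\mathrm{(II)}$} holds because $D(x,y)$ is a function of the symmetric quantity $\delta(x,y)$. The content lies in~\hyperlink{185-96}{$\mathrm{(IV)}$}, which I would reduce to the single implication
\begin{equation*}
\delta(x,y)<r_{n+1}\ \text{ and }\ \delta(y,z)<r_{n+1}\ \Longrightarrow\ \delta(x,z)<r_n\qquad(n\ge1),
\end{equation*}
obtained by applying~\hyperlink{185-95}{$\mathrm{(V)}$} at $\varepsilon=r_n$ and using $r_{n+1}=\psi(r_n)\le\phi(r_n)$. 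Granting this: suppose $D(x,y)<\varepsilon$ and $D(y,z)<\varepsilon$. If $\varepsilon>1$ then $D(x,z)\le1<2\varepsilon$; otherwise choose $n\ge1$ with $2^{-n}<\varepsilon\le 2^{-(n-1)}$, so $D(x,y),D(y,z)\le 2^{-n}$ and hence, by the dictionary, $\delta(x,y),\delta(y,z)<r_n$. For $n=1$ one has $D(x,z)\le1<2\varepsilon$ directly, and for $n\ge2$ the implication yields $\delta(x,z)<r_{n-1}$, i.e.\ $D(x,z)\le 2^{-(n-1)}<2\varepsilon$. This proves~\hyperlink{185-96}{$\mathrm{(IV)}$}.

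For (c): I would first check that a sequence converges to a point in $(X,\delta)$ if and only if it does so in $(X,D)$. Since $\delta$ and $D$ are symmetric it suffices to compare $\delta(x_n,x)\to0$ with $D(x_n,x)\to0$, and this follows from the dictionary together with $r_m\downarrow0$: if $\delta(x_n,x)\to0$ then for each $m$ one has $\delta(x_n,x)<r_m$ eventually, i.e.\ $D(x_n,x)\le 2^{-m}$ eventually; conversely, given $r>0$, choosing $m$ with $r_m\le r$ one has $D(x_n,x)\le 2^{-m}$ eventually, hence $\delta(x_n,x)<r_m\le r$ eventually. By (b), $(X,D)$ satisfies~\hyperlink{185-99}{$\mathrm{(I)}$}, \hyperlink{185-98}{$\mathrm{(II)}$} and~\hyperlink{185-96}{$\mathrm{(IV)}$}, so Theorem~\ref{183-14} shows that the function $d$ of~\eqref{183-13} built from this $D$ is a metric and that $(X,D)$ is metrizable by $d$. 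Since $(X,\delta)$ and $(X,D)$ have exactly the same convergent sequences, they induce the same sequential topology, and therefore $(X,\delta)$ is also metrizable by $d$, which is the assertion of part~\ref{183-141-1}.

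The only genuinely mathematical step, as opposed to bookkeeping, is~\hyperlink{185-96}{$\mathrm{(IV)}$} for $D$, and inside it the point needing care is the interplay between the quantized values $\{2^{-n}\}$ of $D$, the modulus $\phi$, and the geometric rate of $(r_n)$: one must see that a single application of~\hyperlink{185-95}{$\mathrm{(V)}$} at level $r_n$ ``costs'' only the one step $2^{-n}\mapsto 2^{-(n-1)}$, which is what makes the constant come out as $2$; this is exactly what the half-open intervals $[r_{n+1},r_n)$ in the definition of $D$ and the decay $r_{n+1}\le r_n/2$ are designed to secure, and is where one uses the conclusion of~\hyperlink{185-95}{$\mathrm{(V)}$} in the strict form $\delta(x,z)<r_n$ (as in Frink's original phrasing). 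Everything else — \hyperlink{185-99}{$\mathrm{(I)}$}, \hyperlink{185-98}{$\mathrm{(II)}$}, the convergence equivalence, and the reduction to Theorem~\ref{183-14} — is routine.
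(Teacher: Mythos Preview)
Your proposal is correct and follows essentially the same route as the paper's own argument (given in the proof of Corollary~\ref{185-88}): both set up the sequence $(r_n)$, verify~\hyperlink{185-96}{$\mathrm{(IV)}$} for $D$ via the single implication $\delta(x,y),\delta(y,z)<r_{n+1}\Rightarrow\delta(x,z)<r_n$, and then pass to $d$ through Theorem~\ref{183-14} together with the convergence equivalence between $\delta$ and $D$. The only cosmetic differences are that the paper argues~\hyperlink{185-96}{$\mathrm{(IV)}$} by contradiction while you do it directly, and that you make the ``dictionary'' $D(x,y)\le 2^{-n}\iff\delta(x,y)<r_n$ explicit, whereas the paper uses it implicitly; your remark about needing the strict conclusion in~\hyperlink{185-95}{$\mathrm{(V)}$} is well taken and is exactly what the paper silently assumes when it writes $\rho(x,z)<r_{n_0}$.
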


\begin{rem} \label{23} 
	The conclusions of Theorem~\ref{183-14} are still true if any strict inequality in~\hyperlink{185-96}{$\mathrm{(IV)}$} is replaced by the corresponding inequality. 
\end{rem}

\begin{thm}[\cite{aIN1998}, Theorem~I] \label{thm:aIN1998} Let $(X,D,K)$ be a $b$-metric space. Then there exists $0 <\beta \le 1$, depending only on $K$, such that
	\begin{equation} \label{185-93} d(x,y) = \inf \left\{ \sum\limits_{ i =1}^n D^{ \beta }(x_i,x_{i+1}): x_1 = x,x_2, \ldots, x_{n+1} = y \in X, n \in \mathbb{N} \right\}
	\end{equation} is a metric on $X$ satisfying $ \frac{1}{2}D^{\beta} \le d \le D^{\beta}$. In particular, if $ D$ is a metric then $d = D.$
\end{thm}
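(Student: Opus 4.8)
\emph{Proof plan.} The idea is to calibrate the exponent $\beta$ so that $D^{\beta}$ is ``nearly'' a metric, apply Frink's Theorem~\ref{183-14} (and a refinement of it) to $D^{\beta}$, and read off the constants. Set
\[
\beta:=\frac{\ln 2}{\ln(2K)},
\]
so that $(2K)^{\beta}=2$; since $K\ge 1$ forces $2K\ge 2$ we get $0<\beta\le 1$, with $\beta=1$ precisely when $K=1$, and this is the only way $K$ enters. Put $\rho:=D^{\beta}$; then $\rho$ satisfies~\hyperlink{185-99}{$\mathrm{(I)}$} and~\hyperlink{185-98}{$\mathrm{(II)}$}, and since $t\mapsto t^{\beta}$ is nondecreasing the $b$-metric inequality (Definition~\ref{159-81}) gives
\[
D(x,z)\le K\bigl[D(x,y)+D(y,z)\bigr]\le 2K\max\{D(x,y),D(y,z)\},
\]
so that, on raising to the power $\beta$ and using $(2K)^{\beta}=2$,
\begin{equation}\label{eq:strongIV}
\rho(x,z)\le 2\max\{\rho(x,y),\rho(y,z)\}\qquad(x,y,z\in X);
\end{equation}
i.e.\ $\rho$ satisfies~\hyperlink{185-96}{$\mathrm{(IV)}$} in the nonstrict form. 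By Remark~\ref{23}, Theorem~\ref{183-14} applies verbatim to the distance space $(X,\rho)$: the function $d$ of~\eqref{185-93} (which is~\eqref{183-13} written for $\rho$ in place of $D$) is a metric on $X$, $\rho$ and $d$ have the same convergent and Cauchy sequences, and $\frac{1}{4}\rho\le d\le\rho$. In particular the theorem holds with $\frac{1}{4}$ in place of $\frac{1}{2}$; and if $D$ is itself a metric, then $K=1$ is admissible, so $\beta=1$ and $\rho=D$, whence $d(x,y)=\inf\bigl\{\sum_i D(x_i,x_{i+1})\bigr\}\ge D(x,y)$ by~\hyperlink{185-97}{$\mathrm{(III)}$} and $\le D(x,y)$ by the one-link chain, so $d=D$.

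To obtain the sharp constant $\frac{1}{2}$, I would prove the chain estimate
\begin{equation}\label{eq:betachain}
\rho(x_1,x_{n+1})\le 2\sum_{i=1}^{n}\rho(x_i,x_{i+1})\qquad\text{for every chain }x_1,\dots,x_{n+1}\text{ in }X,
\end{equation}
which is exactly what the infimum defining $d$ in~\eqref{185-93} is built to exploit: applied to an arbitrary chain joining $x$ and $y$ it gives $\sum_i\rho(x_i,x_{i+1})\ge\frac{1}{2}\rho(x,y)$, hence $d(x,y)\ge\frac{1}{2}\rho(x,y)=\frac{1}{2}D^{\beta}(x,y)$, and together with $d\le\rho=D^{\beta}$ this yields $\frac{1}{2}D^{\beta}\le d\le D^{\beta}$.

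It remains to establish~\eqref{eq:betachain}, by induction on the chain length $n$. For $n=1$ it is trivial; for $n=2$ it follows from $\rho(x_1,x_3)\le K^{\beta}\bigl[\rho(x_1,x_2)+\rho(x_2,x_3)\bigr]$ with $K^{\beta}=2^{1-\beta}\le 2$ (using $(2K)^{\beta}=2$ and the subadditivity $(a+b)^{\beta}\le a^{\beta}+b^{\beta}$ for $0<\beta\le 1$). For $n\ge 3$, put $T:=\sum_{i=1}^{n}\rho(x_i,x_{i+1})$ and split the chain at the vertex $x_m$ making the two partial $\rho$-sums $\sum_{i<m}\rho(x_i,x_{i+1})$ and $\sum_{i\ge m}\rho(x_i,x_{i+1})$ as close to $T/2$ as possible; then combine~\eqref{eq:strongIV} for the triple $(x_1,x_m,x_{n+1})$ with the induction hypothesis applied to the two strictly shorter sub-chains, the identity $(2K)^{\beta}=2$ being what keeps the resulting constant equal to $2$. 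The step I expect to be the main obstacle is the case in which one link $\rho(x_m,x_{m+1})$ by itself exceeds $T/2$, so that no vertex splits the chain into two parts of $\rho$-mass at most $T/2$: one then has to detach that dominant link, estimate the two flanking sub-chains by the induction hypothesis, and recombine, tracking coefficients exactly as in Frink's proof of~\eqref{183-11} (where a chain's two boundary links carry coefficient $2$ and its interior links coefficient $4$, which is why Theorem~\ref{183-14} alone gives only $\frac{1}{4}$). The gain to $\frac{1}{2}$ uses that $\rho=D^{\beta}$ is not merely a space satisfying~\hyperlink{185-96}{$\mathrm{(IV)}$} but a power of a genuine $b$-metric, so that the exponent $\beta$ together with $(2K)^{\beta}=2$ lets the interior coefficient drop from $4$ to $2$; once the bookkeeping is arranged, the induction is mechanical.
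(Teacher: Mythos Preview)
The paper does not prove this statement: Theorem~\ref{thm:aIN1998} is quoted from \cite{aIN1998} in the preliminaries and used as a black box (e.g.\ in Example~\ref{exam:lp}), so there is no proof in the paper to compare your attempt against. I will therefore assess your argument on its own.

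Your reduction to Frink's theorem is correct and yields the weaker inequality $\tfrac14 D^{\beta}\le d\le D^{\beta}$ (this is exactly Theorem~\ref{thm:pS2009} with $p=\beta$), and your verification that $d=D$ when $D$ is already a metric is fine.

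The gap is in the passage from $\tfrac14$ to $\tfrac12$. Your plan is to prove the chain estimate~\eqref{eq:betachain} by induction, using only the ``strong~\hyperlink{185-96}{(IV)}'' inequality $\rho(x,z)\le 2\max\{\rho(x,y),\rho(y,z)\}$ of~\eqref{eq:strongIV} together with a Frink-type splitting of the chain. This cannot work: the inequality~\eqref{eq:strongIV} alone does \emph{not} imply~\eqref{eq:betachain}. Indeed, on $X=\{x_1,x_2,x_3,x_4\}$ define the symmetric function $\rho$ by
\[
\rho(x_1,x_2)=\rho(x_3,x_4)=\varepsilon,\quad \rho(x_2,x_3)=1,\quad \rho(x_1,x_3)=\rho(x_2,x_4)=2,\quad \rho(x_1,x_4)=4,
\]
with $\varepsilon>0$ small. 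A direct check of all triples shows~\eqref{eq:strongIV} holds, yet for the chain $x_1,x_2,x_3,x_4$ one has $T=1+2\varepsilon$ and $\rho(x_1,x_4)=4>2T$. So the induction you sketch---split near $T/2$, apply~\eqref{eq:strongIV} to the pieces---necessarily loses a factor at the recombination step, which is precisely why Frink's argument yields the coefficients $2,4,\dots,4,2$ in~\eqref{183-11} and only $\tfrac14$ in~\eqref{eq:99}. Your final paragraph concedes that ``more than~\hyperlink{185-96}{(IV)}'' is needed and asserts that the $b$-metric structure makes the interior coefficient drop from $4$ to $2$, but you never say \emph{how}; the counterexample above shows this is not a matter of bookkeeping. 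The actual proof in~\cite{aIN1998} exploits the additive $b$-metric inequality $D(x,z)\le K[D(x,y)+D(y,z)]$ (not merely its max consequence) in the inductive step, and the choice of $\beta$ is calibrated to that argument; you would need to supply this.
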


\begin{thm}[\cite{pS2009}, Proposition on page 4308] \label{thm:pS2009} Let $(X, D,K)$ be a $b$-metric space, $0 <p \le 1$ satisfying $(2K)^p= 2$, and for all $x,y \in X,$
	\begin{equation}\label{eq:pS2009}
	d(x,y) = \inf \left\{ \sum\limits_{ i =1}^n D^{p}(x_i,x_{i+1}): x_1 = x,x_2, \ldots,x_n, x_{n+1} = y \in X, n \in \mathbb{N} \right\}.
	\end{equation} Then $d$ is a metric on $X$ satisfying $\frac{1}{4} D^p \le d \le D^p$. In particular, if $ D$ is a metric then $d = D.$
\end{thm}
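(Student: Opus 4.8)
The plan is to reduce the statement to Frink's Theorem~\ref{183-14} applied to the rescaled distance $\rho := D^{p}$. First I would record that such a $p$ is legitimate: since $K \ge 1$ we have $2K \ge 2$, so $p = \log 2 / \log(2K)$ lies in $(0,1]$, with $p = 1$ precisely when $K = 1$. Because $t \mapsto t^{p}$ is a strictly increasing bijection of $[0,\infty)$ fixing $0$, the function $\rho$ inherits~\hyperlink{185-99}{$\mathrm{(I)}$} and~\hyperlink{185-98}{$\mathrm{(II)}$} from $D$, and the infimum~\eqref{eq:pS2009} defining $d$ coincides with the function~\eqref{183-13} of Theorem~\ref{183-14} applied to $(X,\rho)$. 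So it remains only to check that $\rho$ satisfies~\hyperlink{185-96}{$\mathrm{(IV)}$} and then read off the conclusions.

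For~\hyperlink{185-96}{$\mathrm{(IV)}$}, I would combine the $b$-metric inequality with the elementary bound $a+b \le 2\max\{a,b\}$ to get $D(x,z) \le K\big(D(x,y)+D(y,z)\big) \le 2K\max\{D(x,y),D(y,z)\}$ for all $x,y,z \in X$; raising to the power $p \in (0,1]$ and using $(2K)^{p} = 2$ gives
\begin{equation*}
\rho(x,z) \le (2K)^{p}\,\big(\max\{D(x,y),D(y,z)\}\big)^{p} = 2\max\{\rho(x,y),\rho(y,z)\}.
\end{equation*}
Hence $\rho(x,y)<\varepsilon$ and $\rho(y,z)<\varepsilon$ force $\rho(x,z)<2\varepsilon$, which is exactly~\hyperlink{185-96}{$\mathrm{(IV)}$}. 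Now Theorem~\ref{183-14} applies to $(X,\rho)$: part~(\ref{183-14-1}) says $d$ is a metric on $X$, and part~(\ref{183-14-2}) gives $\tfrac14\rho(x,y)\le d(x,y)\le\rho(x,y)$, that is, $\tfrac14 D^{p}\le d\le D^{p}$.

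Finally, if $D$ is a metric we may take $K=1$, which forces $p=1$ and $\rho=D$; then for every chain $x_{1}=x,x_{2},\dots,x_{n+1}=y$ the triangle inequality gives $\sum_{i=1}^{n}D(x_{i},x_{i+1})\ge D(x,y)$, while the two-term chain $x_{1}=x,\,x_{2}=y$ realizes the value $D(x,y)$, so the infimum equals $D(x,y)$ and $d=D$. I do not expect a real obstacle here: once Theorem~\ref{183-14} is invoked the argument is short, and the only point needing care is the constant bookkeeping, namely that the normalization $(2K)^{p}=2$ is exactly what converts the $b$-metric constant $K$ into the factor $2$ appearing in~\hyperlink{185-96}{$\mathrm{(IV)}$}; an alternative but longer route would be to verify directly that $D^{p}$ satisfies the explicit inequality~\eqref{183-11} with constants $2,4$ and then rerun the chain argument, which is unnecessary.
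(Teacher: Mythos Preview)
The paper does not supply its own proof of this statement; it is quoted in the preliminaries as \cite[Proposition on page 4308]{pS2009} and used as a tool later on. Your argument is correct and is in fact the approach of the original reference: the choice $(2K)^{p}=2$ is made precisely so that $\rho=D^{p}$ satisfies Frink's condition~\hyperlink{185-96}{$\mathrm{(IV)}$}, after which Theorem~\ref{183-14} yields both that $d$ is a metric and the two-sided estimate $\tfrac14 D^{p}\le d\le D^{p}$. The final clause about $d=D$ when $D$ is already a metric is also handled correctly.
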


\begin{defn}[\cite{AB2000}, Definition~2.1] Let $X$ be a nonempty set, $\nu \in \mathbb{N}$, $\nu\ge 1$ and $\rho: X \times X \longrightarrow [0, +\infty )$ be a function such that for any $x,y \in X$ and for any family  $x_1, \ldots, x_{\nu}$ of pairwise distinct elements in $ X \setminus \{x,y\}$,
	\begin{enumerate} \item $\rho(x,y) = 0$ if and only if $x =y$.
		
		\item $\rho(x,y) = \rho(y,x)$.
		
		\item $\rho(x,y) \le \rho(x,x_1) + \rho(x_1,x_2) + \ldots + \rho(x_{\nu},y)$.
	\end{enumerate}
	Then $\rho$ is called a \emph{$\nu$-generalized metric} on $X$ and $(X,\rho)$ is called a \emph{$\nu$-generalized metric space}. A sequence $\{ x_n\}$ is called \emph{convergent} to $x$ in $(X,\rho)$ if $\lim\limits_{n \rightarrow \infty} \rho(x_n,x) = 0$. A sequence $\{ x_n\}$ is called \emph{Cauchy} if $\lim\limits_{n,m \rightarrow \infty} \rho(x_n,x_m) = 0$. A generalized metric space $(X,\rho)$ is called \emph{complete} if each Cauchy sequence is a convergent sequence.
\end{defn}

\begin{thm}[\cite{JKR2010}, Theorem~3.3] \label{183-60} Let $(X,D,K)$ be a complete $b$-metric space and $T:X \longrightarrow X$ be a map such that $D(Tx,Ty) \le \lambda D(x,y)$ for all $x,y \in X$ and some $\lambda \in \left[0, \frac{ 1}{K} \right)$.
	Then $T$ has a unique fixed point $x^*$ and $\lim\limits_{n \rightarrow \infty} T^n x = x^*$ for all $x \in X$.
\end{thm}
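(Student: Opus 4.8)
The plan is to run the classical Picard iteration, with the single caveat that in a $b$-metric space $D$ need not be sequentially continuous, so any step that would classically pass to a limit inside $D$ must instead be carried out with an explicit estimate. First I would fix $x\in X$, put $x_0=x$ and $x_{n+1}=Tx_n$, and iterate the contraction hypothesis to get $D(x_n,x_{n+1})\le\lambda^n D(x_0,x_1)$ for all $n$.

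Next I would show $\{x_n\}$ is Cauchy. Applying the $b$-metric triangle inequality $D(a,c)\le K\bigl[D(a,b)+D(b,c)\bigr]$ (Definition~\ref{159-81}) repeatedly yields, for $m>n$, a bound of the form
\[
D(x_n,x_m)\ \le\ \sum_{i=n}^{m-1}K^{\,i-n+1}D(x_i,x_{i+1})\ \le\ K\,D(x_0,x_1)\,\lambda^n\sum_{j=0}^{\infty}(K\lambda)^j\ =\ \frac{K\,D(x_0,x_1)}{1-K\lambda}\,\lambda^n .
\]
This is precisely where $\lambda<\tfrac1K$ enters: it makes $K\lambda<1$, the geometric series converges, and the right-hand side tends to $0$. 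Completeness of $(X,D,K)$ then furnishes $x^*\in X$ with $x_n\to x^*$ (limits being unique by one more application of the $b$-metric inequality).

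I would then check directly that $x^*$ is fixed: for every $n$,
\[
D(x^*,Tx^*)\ \le\ K\,D(x^*,x_{n+1})+K\,D(Tx_n,Tx^*)\ \le\ K\,D(x^*,x_{n+1})+K\lambda\,D(x_n,x^*)\ \longrightarrow\ 0,
\]
so $D(x^*,Tx^*)=0$ and $Tx^*=x^*$. Uniqueness follows since $Tp=p$ gives $D(x^*,p)=D(Tx^*,Tp)\le\lambda D(x^*,p)$, forcing $D(x^*,p)=0$; and for any $x\in X$ one has $D(T^nx,x^*)=D(T^nx,T^nx^*)\le\lambda^n D(x,x^*)\to 0$, so $\lim_n T^nx=x^*$.

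I expect the Cauchy step to be the main obstacle: the powers of $K$ coming from the $b$-metric triangle inequality accumulate, and it is exactly the hypothesis $\lambda<\tfrac1K$ that keeps the resulting series summable; the failure of continuity of $D$ is only a minor nuisance, dealt with by the two-term inequality rather than by passing to limits inside $D$. A conceptually different route, more in the spirit of the present paper, is to go through Frink's metrization: by Theorem~\ref{thm:aIN1998} there are $\beta\in(0,1]$ and a metric $d$ with $\tfrac12 D^\beta\le d\le D^\beta$, and since $D$ and $d$ share the same Cauchy and convergent sequences, $(X,d)$ is complete. The contraction for $T$ transfers to $d$ only with constant $\le 2\lambda^\beta$, which need not be $<1$; but $T^k$ is a $d$-contraction with constant $\le 2\lambda^{k\beta}\to 0$, so for $k$ large the Banach principle in the metric space $(X,d)$ applies to $T^k$, and a standard argument (the unique fixed point of $T^k$ is mapped by $T$ to a fixed point of $T^k$, hence is fixed by $T$) recovers $x^*$ and the convergence $T^nx\to x^*$. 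On this route the obstacle is precisely the loss of a usable contraction constant under metrization, circumvented by iterating $T$.
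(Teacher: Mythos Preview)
Your direct Picard-iteration argument is correct, and your identification of the Cauchy step as the place where $\lambda<1/K$ is essential is exactly right. However, the paper does not actually prove Theorem~\ref{183-60}: it is quoted from \cite{JKR2010} as a preliminary result, with no proof given. What the paper does prove is the strictly stronger Theorem~\ref{thm:Bcpb}, in which the hypothesis is relaxed to $\lambda\in[0,1)$.

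The comparison of methods is therefore with the proof of Theorem~\ref{thm:Bcpb}, and here both of your routes differ from the paper's. Your second route (metrize via Theorem~\ref{thm:aIN1998}, lose control of the contraction constant, recover it by passing to $T^k$) is in the spirit of the paper but misses its key observation: using the Paluszy\'nski--Stempak metric $d$ from~\eqref{eq:pS2009}, the paper exploits the \emph{infimum-over-chains} formula directly. Given any chain $x=x_1,\dots,x_{n+1}=y$, the images $Tx_1,\dots,Tx_{n+1}$ form a chain from $Tx$ to $Ty$, so
\[
d(Tx,Ty)\ \le\ \sum_{i=1}^n D^p(Tx_i,Tx_{i+1})\ \le\ \lambda^p\sum_{i=1}^n D^p(x_i,x_{i+1}),
\]
and taking the infimum over chains gives $d(Tx,Ty)\le\lambda^p d(x,y)$ with $\lambda^p<1$. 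Thus $T$ itself is a contraction on the complete metric space $(X,d)$, no iteration trick needed, and the result follows from the classical Banach principle. This buys the full range $\lambda\in[0,1)$, whereas your direct approach genuinely requires $\lambda<1/K$ to sum the geometric series in $K\lambda$, and your iteration route, while correct, is less sharp than necessary.
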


\begin{defn}[\cite{bC2013-3}, Definition~3.3] Let $X$ be a nonempty set and $\rho: X \times X \longrightarrow [0,+\infty) $ be a function such that for all $x,y,z \in X$,
	\begin{enumerate} \item $\rho(x,y) = 0$ if and only if $x =y$. 
		
		\item For every $\varepsilon>0$, there exists $\phi(\varepsilon) >0$ such that if $\rho(x,y) \le \phi(\varepsilon)$ and $\rho(y,z) \le \phi(\varepsilon)$ then $\rho(x,z) \le \varepsilon$ and $\rho(z,x) \le \varepsilon$.
	\end{enumerate}
	Then $\rho$ is called an \emph{$F$-distance} on $X$ and $(X,\rho)$ is called an \emph{$F$-distance space}.
\end{defn}

\section{Remarks on Frink's metrization technique} \label{2}
In this section, we construct a simple counterexample to show again the limits of Frink's~construction~\cite[page~134]{AHF1937}. 
In~2006 Schroeder constructed a counterexample showing that for given $b$-metric space $(X,D,K)$ the distance function $d$ defined by~\eqref{183-13} is not a metric \cite[Example~2]{vS2006}. 
The following example, that is simpler than \cite[Example~2]{vS2006}, also shows that Theorem~\ref{183-14}.\eqref{183-14-0} and Theorem~\ref{183-14}.\eqref{183-14-1} do not~hold if a space satisfying~\hyperlink{185-99}{$\mathrm{(I)}$}, \hyperlink{185-98}{$\mathrm{(II)}$} and~\hyperlink{185-96}{$\mathrm{(IV)}$} is replaced by a $b$-metric space.

\begin{exam} \label{185-091} Let $X = \mathbb{R}$, and $ D(x,y) = |x-y|^2$ for all $x,y \in X$. Then for all $x,y \in X$, $$|x-z|^2 \le 2 \left( |x-y|^2 + |y-z|^2 \right) .$$ So $(X,D,K)$ is a $b$-metric space with $K =2$.
	However, we find that for $n$ large enough,
	\begin{eqnarray*}
		2 D \left(0, \frac{ 1}{n } \right) + 4 D \left( \frac{ 1}{n }, \frac{ 2}{n } \right) + \ldots + 4 D \left(\frac{ n -2}{n }, \frac{ n -1}{n } \right) + 2 D \left( \frac{ n -1}{n }, 1 \right) 
		&
		\le& \frac{ 4}{n }\\
		&<&1\\
		&=& D(0,1).
	\end{eqnarray*}  
	Then Theorem~\ref{183-14}.\eqref{183-14-0} does not hold. We also find that for all $n$,
	$$ d(0,1) \le D \left(0, \frac{ 1}{n } \right) + D \left( \frac{ 1}{n }, \frac{ 2}{n } \right) + \ldots + D \left(\frac{ n -2}{n }, \frac{ n -1}{n } \right) + D \left( \frac{ n -1}{n }, 1 \right) 
	\le \frac{ 4}{n }.
	$$ Letting $n \rightarrow \infty$ yields $d(0,1) = 0$. Then $d$ is not a metric. So Theorem~\ref{183-14}.\eqref{183-14-1} does not hold.
\end{exam}

For the case $D$ being a $b$-metric, Frink's metrization technique was revised in~\cite{aIN1998} and~\cite{pS2009}, see Theorem~\ref{thm:aIN1998} and Theorem~\ref{thm:pS2009}. 
Note that Frink reproved Chittenden's theorem in~\cite{eWC1917} by using the technique in the proof of Theorem~\ref{183-14}, see~\cite[pages~134-135]{AHF1937}. Then he used Chittenden's theorem to obtain the metrization of a space under conditions of Alexandroff and Urysohn, Niemytski and Wilson, and some others. We next present detailed proofs for these results, which will be useful in next sections. Notice that the condition corresponding to~\eqref{C} in Corollary~\ref{185-10} originally given by Alexandroff and Urysohn implied that all sets of $ \mathcal{G}_n$ are open. Frink \cite[page~136]{AHF1937} called a collection of sets $G_{n_1}, \ldots, G_{n_k}$ a \textit{chain} joining $a$ and $b$ provided $a \in G_{n_1}$, $b \in G_{n_k}$ and two successive sets of the chain have a common point. Then he defined 
\begin{eqnarray*}
	d(a,b) &=& \inf\Big\{\sum \limits_{r =1}^{k}\frac{1}{2^{n_r}}: a \in G_{n_1}, b \in G_{n_k}, G_{n_r} \in \mathcal{G}_{n_r} \text{ for all } r =1, \ldots, k\\
	&& \text{ and } G_{n_r} \cap G_{n_{r+1}} \ne \emptyset \text{ for all } r =1, \ldots, k-1 \Big\}. 
\end{eqnarray*} Frink asserted that $d$ is a metric on $X$. This technique was used later to show that a space with a distance function satisfying Niemytski and Wilson's conditions is metrizable \cite[page~137]{AHF1937}, and to prove some other results \cite[Theorems~1, 2, 3 \& 4]{AHF1937}.
However, the following example shows that the above $d$ is not a metric. This implies that the Frink's argument in \cite[page~136]{AHF1937} is not suitable.

\begin{exam} Let $X = \mathbb{R}$ with the usual metric $d$ and $\mathcal{G}_n = \left\{B\left(x, \frac{2}{n},d\right): x \in X\right\}$ for all $n$. Then $ \mathcal{G}_n$'s satisfy all assumptions of Corollary~\ref{185-10}. However, for $a =0$ and $b = 1$, define $G_{n_r} = B\left(\frac{r-1}{n}, \frac{2}{n}\right)$ for all $r = 1, \ldots, n+1$, then $0 \in G_{n_1}$, $1 \in G_{n_{n+1}}$ and $G_{n_r} \cap G_{n_{r+1}} \ne \emptyset$ for all $r = 1, \ldots, n$. Since $G_{n_r} \in \mathcal{G}_{n}$ for all $r =1, \ldots, n+1$, it follows that 
	$ d(0,1) \le \sum\limits_{r=1}^{n+1} \frac{1}{2^{n_r}} = \frac{n+1}{2^n}.$
	Letting $n \rightarrow \infty$ yields $d(0,1) =0$. This implies that $d$ is not a metric on $X.$
\end{exam} 

\begin{cor}[Chittenden's theorem] \label{185-88} Let $(X, \rho) $ be a space satisfying~\hyperlink{185-99}{$\mathrm{(I)}$}, \hyperlink{185-98}{$\mathrm{(II)}$} and~\hyperlink{185-95}{$\mathrm{(V)}$}. Then~$(X,\rho)$ is metrizable.
\end{cor}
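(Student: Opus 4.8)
The plan is to read Chittenden's theorem off the two Frink metrization results already recorded, namely Theorem~\ref{183-141} together with Theorem~\ref{183-14}; no fresh idea is required. Starting from a distance function $\rho$ on $X$ satisfying~\hyperlink{185-99}{$\mathrm{(I)}$}, \hyperlink{185-98}{$\mathrm{(II)}$} and~\hyperlink{185-95}{$\mathrm{(V)}$}, with associated modulus $\phi$, the first move is to replace $\rho$ by the dyadically quantized distance $D$ of Theorem~\ref{183-141}: put $\psi(\varepsilon)=\min\{\phi(\varepsilon),\varepsilon/2\}$, form the sequence $r_1=1$, $r_{n+1}=\psi(r_n)$, which decreases strictly to $0$ since $0<\psi(\varepsilon)\le\varepsilon/2$ for every $\varepsilon>0$, and let $D$ equal $1$ where $\rho\ge r_1$, equal $2^{-n}$ on the shell $r_n>\rho\ge r_{n+1}$, and vanish on the diagonal. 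By Theorem~\ref{183-141}.\eqref{183-141-0}, the distance space $(X,D)$ then satisfies~\hyperlink{185-99}{$\mathrm{(I)}$}, \hyperlink{185-98}{$\mathrm{(II)}$} and~\hyperlink{185-96}{$\mathrm{(IV)}$}.

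The second move is to feed $(X,D)$ into Theorem~\ref{183-14}: the chain infimum $d$ defined by~\eqref{183-13} is then a genuine metric on $X$, it satisfies $\frac{1}{4}D\le d\le D$ by Theorem~\ref{183-14}.\eqref{183-14-2}, and $(X,D)$ is metrized by $d$ by the last item of Theorem~\ref{183-14}.\eqref{183-14-2}. It then remains to transport this back to $\rho$. By Theorem~\ref{183-141}.\eqref{183-141-1}, a sequence converges, to a prescribed limit, in $(X,\rho)$ precisely when it does in $(X,D)$, so $\rho$ and $D$ generate one and the same sequential topology on $X$; and since $\rho$ and $D$ are both symmetric, that sequential topology coincides with the ball topology $\mathcal{T}(\rho)$, respectively $\mathcal{T}(D)$, as recalled in Section~\ref{sec:1} following \cite{aTD2015}. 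Concatenating the identifications yields $\mathcal{T}(\rho)=\mathcal{T}(D)=\mathcal{T}(d)$, that is, $(X,\rho)$ is metrized by the metric $d$, which is the assertion. (In fact Theorem~\ref{183-141}.\eqref{183-141-1} already states this conclusion verbatim, so in the final write-up the whole argument contracts to a single appeal to Theorem~\ref{183-141}.)

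Since both Frink theorems are available, I do not anticipate a genuine obstacle. The one substantive ingredient in the chain is the verification, packaged in Theorem~\ref{183-141}.\eqref{183-141-0}, that the quantized distance $D$ obeys the generalized triangle inequality~\hyperlink{185-96}{$\mathrm{(IV)}$}; this is the only place where the hypothesis~\hyperlink{185-95}{$\mathrm{(V)}$} is really used, the scale $\{r_n\}$ being arranged so that if $\rho(x,y)$ and $\rho(y,z)$ both lie below a given level then, by~\hyperlink{185-95}{$\mathrm{(V)}$}, $\rho(x,z)$ lies below a controlled lower level, whence $D(x,z)$ is at most a bounded multiple of $\max\{D(x,y),D(y,z)\}$. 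I would simply cite Theorem~\ref{183-141} for this rather than re-run Frink's estimate; the definition of $d$, the proof that it is a metric, and the topology bookkeeping are all routine once Theorems~\ref{183-14} and~\ref{183-141} are in hand. It deserves emphasis, though, that the detour through $D$ is essential: feeding $\rho$ itself into the chain infimum~\eqref{183-13} can collapse it to a pseudodistance, as Example~\ref{185-091} already exhibits.
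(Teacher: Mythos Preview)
Your proposal is correct and follows the same route as the paper: quantize $\rho$ into the dyadic distance $D$ via the scale $r_{n+1}=\psi(r_n)$, observe that $D$ satisfies~\hyperlink{185-96}{$\mathrm{(IV)}$}, and then apply Theorem~\ref{183-14} to obtain the metrizing metric $d$. The only difference is presentational: the paper re-derives the content of Theorem~\ref{183-141} in full (the verification that $D$ satisfies~\hyperlink{185-96}{$\mathrm{(IV)}$} and the equivalence of convergence in $(X,\rho)$ and $(X,D)$), because those explicit computations are reused in later sections, whereas you simply cite Theorem~\ref{183-141} as a black box.
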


\begin{proof} For any $\varepsilon >0$, define $\psi(\varepsilon ) = \min \left\{\phi(\varepsilon ), \frac{ \varepsilon }{2} \right\}$. Therefore, for all $x,y,z \in X$, if $ \rho(x,y) <\psi(\varepsilon)$ and $ \rho(y,z) <\psi(\varepsilon)$ then $\rho(x,z) <\varepsilon$. For each $n\in \mathbb{N}$, define $ r_1 =1, \ldots, r_{n+1} = \psi(r_n), \ldots $ Then $\lim\limits_{n \rightarrow \infty} r_n = 0$. Define 
	\begin{equation*} 
	D (x,y) = D (y,x) = \left\{\begin{array}{ll} 0 & \hbox{ if } x = y\\ 1 &\hbox{ if } \rho(x,y) \ge r_1 \\ \frac{1}{2^n} &\hbox{ if } r_n > \rho(x,y) \ge r_{n+1}.\end{array}\right. 
	\end{equation*} We claim that $D$ satisfies~\hyperlink{185-96}{$\mathrm{(IV)}$}. On the contrary, suppose that there exist $\varepsilon >0$ and $x,y,z \in X$ satisfying $D(x,y) <\varepsilon$, $D (y,z) <\varepsilon$ and $D(x,z) \ge 2 \varepsilon$. Since $D(x,z) \le 1$, it follows that $2 \varepsilon \le 1$, and so $\varepsilon \le \frac{ 1}{2}$. Then there exists $n_0 \ge 1$ satisfying $ \frac{ 1}{2^{ n_0+1}} <\varepsilon \le \frac{ 1}{2^{n_0}}$. This implies $D(x,y)<\frac{ 1}{2^{n_0}}$ and $D(y,z) <\frac{ 1}{2^{n_0}}$. Therefore $\rho(x,y) <r_{n_0+1} = \psi(r_{n_0})$ and $\rho(y,z) <r_{n_0+1} = \psi(r_{n_0})$. Then $\rho(x,z) <r_{n_0}$. This gives $D(x,z) \le \frac{ 1}{2^{ n_0}} <2 \varepsilon$, a contradiction. So $D$ satisfies~\hyperlink{185-96}{$\mathrm{(IV)}$}. It is clear that $D$ also satisfies~\hyperlink{185-99}{$\mathrm{(I)}$} and~\hyperlink{185-98}{$\mathrm{(II)}$}. By Theorem~\ref{183-14}, $(X, D)$ is metrizable by the metric $d$ defined as in~\eqref{183-13}.

	We next prove that $ \lim\limits_{n\rightarrow\infty} x_n = x$ in $(X,\rho)$ if and only if $\lim\limits_{n\rightarrow\infty} x_n = x$ in $(X,D)$. Indeed, if $\lim\limits_{n\rightarrow\infty} x_n = x$ in $(X,\rho)$ then $\lim\limits_{n\rightarrow\infty} \rho(x_n,x) = 0$. For each $\varepsilon >0$, there exists $n_0$ such that $ \frac{1}{2^{n_0}} <\varepsilon$. There also exists $n_1$ such that $\rho(x_n,x) <r_{n_0} $ for all $n \ge n_1$. Since $\rho(x_n,x) <r_{n_0} $, we have $ D(x_n,x) \le \frac{1}{2^{n_0}}$, and so $D(x_n,x) <\varepsilon$ for all $n \ge n_1$. This implies that $ \lim\limits_{n\rightarrow\infty} D(x_n,x) = 0$, and thus $\lim\limits_{n\rightarrow\infty} x_n = x$ in $(X,D)$. 
	
	Next, let $\lim\limits_{n\rightarrow\infty} x_n = x$ in $(X,D)$. Note that for each $\varepsilon >0$ there exists $n_0$ such that $r_{n_0}<\varepsilon$. Since $\lim \limits_{n \rightarrow \infty} x_n =x $ in $(X,D)$, there exists $n_1$ such that $ D(x_n,x) <\frac{1}{2^{n_0}}$ for all $n \ge n_1$. Therefore $\rho(x_n,x) \le r_{n_0} <\varepsilon$ for all $n \ge n_1$. This implies that $ \lim\limits_{n\rightarrow\infty} \rho(x_n,x) =0$, and so $\lim\limits_{n\rightarrow\infty} x_n =x $ in~$(X,\rho)$.
	
	By the above, $ \lim\limits_{n\rightarrow\infty} x_n = x$ in $(X,\rho)$ if and only if $\lim\limits_{n\rightarrow\infty} x_n = x$ in $(X,D)$. Since $(X, D)$ is metrizable by the metric $d$, we get that $(X,\rho)$ is metrizable by the metric~$d$.
\end{proof}

\begin{cor}[Alexandroff and Urysohn] \label{185-10} Let $X$ be a space and $\mathcal{G}_n$'s be families of subsets of~$X$ satisfying the following.
	\begin{enumerate}[(A)]\item \label{A} If $G_n, G'_n \in \mathcal{G}_n$ and $G_n \cap G_n' \ne \emptyset $ for some $n >1$ then there exists $G_{n -1} \in \mathcal{G}_{n-1}$ such that $G_n \subset G_{n-1}$ and $G_n '\subset G_{n-1}$.
		
		\item \label{B} If $a \ne b$ then there exists $n$ such that $ \left\{a,b\right\}\not \subset G_n$ for all $G_n \in \mathcal{G}_n$.
		
		\item \label{C} If $S_n(x) = \bigcup \left\{G_n \in \mathcal{G}_n: x \in G_n\right\}$ then $\left\{S_n(x):n \in \mathbb{N}\right\}$ forms a complete system of neighborhoods of the point $x$.
	\end{enumerate}
	Then $X$ is metrizable.
\end{cor}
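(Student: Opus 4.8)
The plan is to metrize $X$ by applying Chittenden's theorem (Corollary~\ref{185-88}) to a symmetric distance manufactured from the families $\mathcal{G}_n$. First I would set $\rho(x,x)=0$ and, for $x\ne y$,
\[
\rho(x,y)=\inf\Big(\{1\}\cup\big\{2^{-n}:\{x,y\}\subset G_n\text{ for some }G_n\in\mathcal{G}_n\big\}\Big),
\]
and then record the structural fact on which everything rests: writing $I(x,y)=\{n\in\mathbb{N}:\{x,y\}\subset G_n\text{ for some }G_n\in\mathcal{G}_n\}$, applying~\ref{A} with $G'_n=G_n$ shows that $n\in I(x,y)$ and $n>1$ imply $n-1\in I(x,y)$, so $I(x,y)$ is an initial segment of $\mathbb{N}$ (possibly empty). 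Because~\ref{B} rules out $I(x,y)=\mathbb{N}$ when $x\ne y$, this segment is finite, whence $\rho(x,y)>0$; together with the evident symmetry, $\rho$ satisfies~\hyperlink{185-99}{$\mathrm{(I)}$} and~\hyperlink{185-98}{$\mathrm{(II)}$}. (I would also note that, by~\ref{C}, each $S_n(x)$ is a neighborhood of $x$, so $x$ itself lies in some member of $\mathcal{G}_n$ for every $n$.)

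Next I would check the uniform regular property~\hyperlink{185-95}{$\mathrm{(V)}$}. Given $\varepsilon>0$, pick an integer $n\ge 1$ with $2^{-n}\le\varepsilon$ and set $\phi(\varepsilon)=2^{-(n+1)}$. If $\rho(x,y)<\phi(\varepsilon)$ and $\rho(y,z)<\phi(\varepsilon)$ then, since $\phi(\varepsilon)<1$, the sets $I(x,y)$ and $I(y,z)$ are nonempty initial segments, both containing $n+1$; so there are $G_{n+1},G'_{n+1}\in\mathcal{G}_{n+1}$ with $\{x,y\}\subset G_{n+1}$ and $\{y,z\}\subset G'_{n+1}$. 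Since $y\in G_{n+1}\cap G'_{n+1}$ and $n+1>1$, condition~\ref{A} supplies $G_n\in\mathcal{G}_n$ with $G_{n+1}\cup G'_{n+1}\subset G_n$, so $\{x,z\}\subset G_n$ and hence $\rho(x,z)\le 2^{-n}\le\varepsilon$. Thus $\rho$ satisfies~\hyperlink{185-99}{$\mathrm{(I)}$}, \hyperlink{185-98}{$\mathrm{(II)}$} and~\hyperlink{185-95}{$\mathrm{(V)}$}, and Corollary~\ref{185-88} then produces a metric $d$ on $X$ with $\mathcal{T}(d)=\mathcal{T}(\rho)$.

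Finally I would identify $\mathcal{T}(\rho)$ with the topology of $X$, which by~\ref{C} is the one having $\{S_n(x):n\in\mathbb{N}\}$ as a neighborhood base at $x$. This comes down to two inclusions. On one hand, $\rho(x,y)<2^{-m}$ forces $m+1\in I(x,y)$ (the initial-segment property once more), so some $G_{m+1}\in\mathcal{G}_{m+1}$ contains both $x$ and $y$, giving $B(x,2^{-m},\rho)\subset S_{m+1}(x)$ for every $m\ge 0$. On the other hand, $y\in S_n(x)$ means $\{x,y\}\subset G_n$ for some $G_n\in\mathcal{G}_n$, so $n\in I(x,y)$, $\rho(x,y)\le 2^{-n}$, and therefore $S_n(x)\subset B(x,r,\rho)$ whenever $r>2^{-n}$. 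Hence a subset $U$ of $X$ contains some $S_n(x)$ about each of its points if and only if it contains some $\rho$-ball about each of its points; that is, $U$ is open for the topology of $X$ exactly when it is open for $\mathcal{T}(\rho)$. So the topology of $X$ equals $\mathcal{T}(\rho)=\mathcal{T}(d)$, and $X$ is metrized by $d$.

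The main obstacle is the structural fact invoked throughout: for $x\ne y$ the index set $I(x,y)$ is an initial segment of $\mathbb{N}$ --- this is exactly where both~\ref{A} and~\ref{B} are needed, and once it is established the estimates for~\hyperlink{185-95}{$\mathrm{(V)}$} and the comparison of the two neighborhood systems $\{S_n(x)\}$ and $\{B(x,2^{-n},\rho)\}$ are routine. It is worth being explicit in the last step that ``complete system of neighborhoods'' in~\ref{C} is being used as a neighborhood base, so that openness in the topology of $X$ is genuinely characterized by containing an $S_n(x)$ about each of its points.
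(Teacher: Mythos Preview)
Your proof is correct and follows essentially the same strategy as the paper: your distance $\rho$ coincides with the paper's $D$ (once one observes, as you do, that $I(x,y)$ is a finite initial segment), and the core verification using~\ref{A} is the same computation. The differences are stylistic: the paper checks~\hyperlink{185-96}{$\mathrm{(IV)}$} and applies Theorem~\ref{183-14} directly, whereas you check~\hyperlink{185-95}{$\mathrm{(V)}$} and go through Corollary~\ref{185-88} (one extra layer); and the paper identifies the topologies via sequential convergence while you compare neighborhood bases $\{S_n(x)\}$ and $\{B(x,2^{-n},\rho)\}$. Both routes are valid, with the paper's being marginally more direct.
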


\begin{proof} Define a function $D: X \times X \longrightarrow [0,\infty)$ as follows $$ D(a,b) = D(b,a)=
	\begin{cases} 0 & \mbox{ if } a = b\\ 1 &\mbox{ if } a \ne b \mbox{ and } \left\{a,b\right\} \not \subset G_n \mbox{ for all } n\\ 
	\frac{1}{2^n} & \mbox{ if } a \ne b \mbox{ and } n = \max \left\{k: \left\{a,b\right\} \subset G_k\right\}.\end{cases} $$
	Then $D$ satisfies~\hyperlink{185-99}{$\mathrm{(I)}$}, \hyperlink{185-98}{$\mathrm{(II)}$} and~\hyperlink{185-96}{$\mathrm{(IV)}$}. 
	By Theorem~\ref{183-14}, $(X,D)$ is metrizable by the metric $d$ defined by~\eqref{183-13}. 
	
	Next, we shall prove that $ \lim\limits_{n\rightarrow\infty} x_n = x$ in the topological space $X$ if and only if $\lim\limits_{n\rightarrow\infty} x_n = x$ in $(X,D)$. Indeed, if $ \lim\limits_{n\rightarrow\infty} x_n = x$ in the topological space $X$ then for each $\varepsilon >0$ there exists $n_0$ such that $\frac{1}{2^{n_0}}<\varepsilon$. Since $\lim \limits_{n \rightarrow \infty} x_n =x $ in the given topological space $X$, there exists $n_1$ such that $x_n \in S_{n_0}(x)$ for all $n \ge n_1$. So there exists $G_{n_0} \in \mathcal{G}_{n_0}$ such that $\{x_n,x\} \subset G_{n_0}$. Therefore $D(x_n,x) \le \frac{1}{2^{n_0}} <\varepsilon$ for all $n \ge n_1$. This implies that $\lim\limits_{n\rightarrow\infty} x_n =x $ in~$(X,D)$. 
	
	If $\lim\limits_{n\rightarrow\infty} x_n = x$ in $(X,D)$ then $\lim\limits_{n\rightarrow\infty} D(x_n,x) = 0$. For each $\varepsilon >0$ there exists $n_0$ such that $ \frac{1}{2^{n_0}} <\varepsilon$. There also exists $n_1$ such that $D(x_n,x) <\frac{1}{2^{n_0}} $ for all $n \ge n_1$. Since $D(x_n,x) <\frac{1}{2^{n_0}} $, it follows that $\{x_n,x\} \subset G_{n_0}$ for some $G_{n_0} \in \mathcal{G}_{n_0}$ and all $n \ge n_1$. Then $ x_n \in S_{n_0}(x)$ for all $n \ge n_1$. Therefore$\lim\limits_{n\rightarrow\infty} x_n = x$ in the topological space $X$.
	
	By the above, $ \lim\limits_{n\rightarrow\infty} x_n = x$ in the topological space $X$ if and only if $\lim\limits_{n\rightarrow\infty} x_n = x$ in $(X,D)$. Since $(X,D)$ is metrizable, the topological space $X$ is metrizable.
\end{proof}

Now, we recall Niemytski and Wilson's conditions. Note that~\hyperlink{185-04-a}{$\mathrm{(VIa)}$}, \hyperlink{185-04-b}{$\mathrm{(VIb)}$} and~\hyperlink{185-04-c}{$\mathrm{(VIc)}$} are equivalent \cite[page~137]{AHF1937}, and they are all denoted by~\hyperlink{185-04-a}{$\mathrm{(VI)}$}.

\begin{enumerate}[\text{VI}a.] \item \label{185-04-a} The local axiom of the triangle: Given a point $a$ and a number $\varepsilon>0$, there exists a number $\phi(a,\varepsilon)>0$ such that if $D(a,b)<\phi(a,\varepsilon)$ and $D(c,b) <\phi(a, \varepsilon)$ then $D(a,c) <\varepsilon$.
	
	\item \label{185-04-b} Coherent: If $\lim \limits_{n \rightarrow \infty} D(a,a_n) = 0$ and $\lim \limits_{n \rightarrow \infty} D(a_n,b_n) = 0$ then $\lim \limits_{n \rightarrow \infty} D(a,b_n)=0$.
	
	\item \label{185-04-c} Wilson's condition IV: For each point $a$ and each $k>0$, there is $r>0$ such that if $b$ is a point for which $D(a,b) \ge k$ and $c$ is any point then $D(a,c) + D(b,c) \ge r.$
\end{enumerate}

\begin{cor}[Niemytski and Wilson] \label{185-09} Let $(X, \rho) $ be a space satisfying~\hyperlink{185-99}{$\mathrm{(I)}$}, \hyperlink{185-98}{$\mathrm{(II)}$} and~\hyperlink{185-04-a}{$\mathrm{(VI)}$}. Then $(X,\rho)$ is metrizable.
\end{cor}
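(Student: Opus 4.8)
The plan is to reduce the statement to the Alexandroff--Urysohn criterion, Corollary~\ref{185-10}: from the local axiom of the triangle~\hyperlink{185-04-a}{$\mathrm{(VI)}$}, used in its form~\hyperlink{185-04-a}{$\mathrm{(VIa)}$}, I would manufacture families $\mathcal{G}_n$ of balls of $(X,\rho)$ satisfying the three hypotheses~(A), (B), (C) of that corollary, and then invoke it. Corollary~\ref{185-88} does not obviously apply directly, since~\hyperlink{185-04-a}{$\mathrm{(VI)}$} is purely local and need not give the uniform regular property~\hyperlink{185-95}{$\mathrm{(V)}$}; routing through the $\mathcal{G}_n$ and Theorem~\ref{183-14} (via Corollary~\ref{185-10}) sidesteps this.

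First I would normalize the modulus: replacing each $\phi(a,\varepsilon)$ by $\min\{\phi(a,\varepsilon),\varepsilon/2\}$ preserves~\hyperlink{185-04-a}{$\mathrm{(VIa)}$}, so we may assume $\phi(a,\varepsilon)\le\varepsilon/2$ for all $a\in X$ and $\varepsilon>0$. For each $a\in X$ set $\varepsilon_1^a=1$ and $\varepsilon_{k+1}^a=\phi\big(a,\phi(a,\varepsilon_k^a)\big)$; then $0<\varepsilon_{k+1}^a\le\tfrac14\varepsilon_k^a$, so $\varepsilon_k^a\le4^{1-k}\to0$, and for $k\ge2$ the radius $\varepsilon_k^a$ is obtained from $\varepsilon_{k-1}^a$ by a \emph{double} application of $\phi(a,\cdot)$. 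Put $\mathcal{G}_n=\big\{B(a,\varepsilon_n^a,\rho):a\in X\big\}$.

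Conditions~(B) and~(C) should fall out quickly. For~(B): if $a\ne b$ then $\eta:=\rho(a,b)>0$ by~\hyperlink{185-99}{$\mathrm{(I)}$}, and for $n$ with $4^{1-n}\le\phi(a,\eta)$ no member $B(c,\varepsilon_n^c,\rho)$ of $\mathcal{G}_n$ can contain both $a$ and $b$: one would have $\rho(a,c),\rho(b,c)<\varepsilon_n^c\le\phi(a,\eta)$, and~\hyperlink{185-04-a}{$\mathrm{(VIa)}$} with middle point $c$ would give $\rho(a,b)<\eta$. For~(C): whenever $4^{1-n}\le\phi(x,\delta)$ one has $S_n(x)\subseteq B(x,\delta,\rho)$, because $x,y\in B(a,\varepsilon_n^a,\rho)$ forces $\rho(x,a),\rho(y,a)<\varepsilon_n^a\le\phi(x,\delta)$, hence $\rho(x,y)<\delta$ by~\hyperlink{185-04-a}{$\mathrm{(VIa)}$} with middle point $a$; since also $x\in B(x,\varepsilon_n^x,\rho)\subseteq S_n(x)$ and balls of small radius are neighborhoods of their centers (again by~\hyperlink{185-04-a}{$\mathrm{(VIa)}$}), the sets $S_n(x)$ form a complete system of neighborhoods of $x$ in $\mathcal{T}(\rho)$.

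The only step I expect to require real care is condition~(A), the nesting of the $\mathcal{G}_n$; this is exactly where Frink's argument with \emph{arbitrary} families breaks down (compare the example following Corollary~\ref{185-10}), and it is what forces the double $\phi$ in the recursion. Given $k>1$ and $B(a,\varepsilon_k^a,\rho)\cap B(b,\varepsilon_k^b,\rho)\ne\emptyset$, I would relabel so that $\varepsilon_k^a\le\varepsilon_k^b$, fix a common point $x$, and take $G_{k-1}:=B(b,\varepsilon_{k-1}^b,\rho)$. The inclusion $B(b,\varepsilon_k^b,\rho)\subseteq G_{k-1}$ is clear. For $B(a,\varepsilon_k^a,\rho)\subseteq G_{k-1}$: since $\rho(b,x)<\varepsilon_k^b=\phi\big(b,\phi(b,\varepsilon_{k-1}^b)\big)$ and $\rho(a,x)<\varepsilon_k^a\le\varepsilon_k^b$, \hyperlink{185-04-a}{$\mathrm{(VIa)}$} with middle point $x$ gives $\rho(b,a)<\phi(b,\varepsilon_{k-1}^b)$; then any $y$ with $\rho(a,y)<\varepsilon_k^a$ satisfies $\rho(a,y)<\varepsilon_k^b=\phi\big(b,\phi(b,\varepsilon_{k-1}^b)\big)\le\phi(b,\varepsilon_{k-1}^b)$, so \hyperlink{185-04-a}{$\mathrm{(VIa)}$} with middle point $a$ yields $\rho(b,y)<\varepsilon_{k-1}^b$, i.e.\ $y\in G_{k-1}$. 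Once~(A), (B), (C) are in place, Corollary~\ref{185-10} shows that $(X,\rho)$ is metrizable.
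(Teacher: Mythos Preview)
Your proposal is correct and follows essentially the same route as the paper: normalize $\phi$ so that $\phi(a,\varepsilon)\le\varepsilon/2$, iterate the doubled modulus $\psi(a,\varepsilon)=\phi(a,\phi(a,\varepsilon))$ starting from $1$ to get the radii, set $\mathcal{G}_n$ to be the family of resulting balls, and invoke the Alexandroff--Urysohn criterion (Corollary~\ref{185-10}). The paper's proof simply asserts that the hypotheses~(A), (B), (C) hold without verification, whereas you actually check them; in particular your argument for~(A), using the double $\phi$ to first pull $a$ close to $b$ and then any $y\in B(a,\varepsilon_k^a,\rho)$ into $B(b,\varepsilon_{k-1}^b,\rho)$, is the intended mechanism.
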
 

\begin{proof} We may assume that $(X,\rho)$ satisfy~\hyperlink{185-04-a}{(VIa)}. For any $\varepsilon>0$ define $$\phi'(a,\varepsilon) = \min\left\{\phi(a,\varepsilon), \frac{\varepsilon}{2}\right\} \text{ and } \psi(a, \varepsilon) = \phi'(a,\phi'(a,\varepsilon)).$$ For any $x \in X$ and all $n\in \mathbb{N}$, define $r_1(x) =1$ and $r_{n+1}(x) = \psi(x,r_n(x))$. Then $ \lim \limits_{n \rightarrow \infty} r_n(x) = 0$. Define $V_n(x) = B(x,r_n(x),\rho)$ and $\mathcal{G}_n = \left\{V_n(x): x \in X\right\}$. Then all assumptions of Corollary~\ref{185-10} are satisfied, and so $(X,\rho)$ is metrizable by the metric $d$ induced from the distance $D$ as in the proof of Corollary~\ref{185-10}. 
\end{proof}

\section{Applications to $b$-metric spaces} \label{3}
In this section, we show that the Banach contraction principle in $b$-metric spaces can be deduced from the Banach contraction principle in metric spaces. We also use the formula~\eqref{eq:pS2009} to calculate corresponding metrics induced by some $b$-metrics known in the literature.

We find that every $b$-metric space $(X,D,K)$ is metrizable with the metric $d$ defined by~\eqref{eq:pS2009}. Note that, on transferring fixed point theorems in metric spaces to $b$-metric spaces, the contraction constants were assumed to be in $\big[0, \frac{ 1}{K} \big) \subset [0,1)$, see for instance first fixed point theorems in $b$-metric spaces \cite[Theorems~3.1-3.3]{mB2008} and recent results \cite[Definition~2.1, Theorems~2.2 \& 2.4]{lVE2015}. By using the corresponding metric of given $b$-metric defined by~\eqref{eq:pS2009}, we next show that the contraction constant $\lambda \in \left[ 0, \frac{1}{K}\right)$ in Theorem~\ref{183-60} can be relaxed to $ \lambda \in [0,1).$

\begin{thm} \label{thm:Bcpb} Let $(X,D,K)$ be a complete $b$-metric space and $T:X \longrightarrow X$ be a map such that $D(Tx,Ty) \le \lambda D(x,y)$ for all $x,y \in X$ and some $\lambda \in [0, 1)$.
	Then $T$ has a unique fixed point $x^*$ and $\lim\limits_{n \rightarrow \infty} T^n x = x^*$ for all $x \in X$.
\end{thm}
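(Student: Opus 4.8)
The plan is to reduce the statement to the classical Banach contraction principle in complete metric spaces, by passing to the metric $d$ associated with the $b$-metric $D$ through Theorem~\ref{thm:pS2009}. Fix $p \in (0,1]$ with $(2K)^p = 2$ (which exists since $2K \ge 2$), and let $d$ be defined by~\eqref{eq:pS2009}, so that $\frac14 D^p \le d \le D^p$ on $X \times X$. First I would record that $(X,d)$ is a \emph{complete} metric space: that $d$ is a metric is exactly Theorem~\ref{thm:pS2009}, and the two-sided bound $\frac14 D^p \le d \le D^p$ shows that a sequence is $d$-Cauchy if and only if it is $D$-Cauchy and that it $d$-converges to a point $x$ if and only if it $D$-converges to $x$; since $(X,D,K)$ is complete, so is $(X,d)$.

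The second and main step is to show that $T$ is a contraction of $(X,d)$ with ratio $\lambda^p$. Here I would work directly with the chain definition~\eqref{eq:pS2009} of $d$ rather than with the crude sandwich bound. Given any chain $x = x_1, x_2, \ldots, x_{n+1} = y$ in $X$, the points $Tx = Tx_1, Tx_2, \ldots, Tx_{n+1} = Ty$ form a chain from $Tx$ to $Ty$, and raising the hypothesis $D(Tx_i,Tx_{i+1}) \le \lambda D(x_i,x_{i+1})$ to the power $p$ gives $\sum_{i=1}^n D^p(Tx_i,Tx_{i+1}) \le \lambda^p \sum_{i=1}^n D^p(x_i,x_{i+1})$. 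Taking the infimum over all such chains yields $d(Tx,Ty) \le \lambda^p d(x,y)$, with $\lambda^p \in [0,1)$ since $p>0$ and $\lambda \in [0,1)$.

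Finally I would invoke the Banach contraction principle in the complete metric space $(X,d)$ to obtain a unique fixed point $x^*$ of $T$ with $\lim_{n\to\infty} d(T^n x, x^*) = 0$ for every $x \in X$. Translating back via $D^p(T^n x, x^*) \le 4\, d(T^n x, x^*)$ gives $\lim_{n\to\infty} D(T^n x, x^*) = 0$; and since the underlying set of $(X,d)$ is $X$ itself, a fixed point of $T$ in $(X,D,K)$ is a fixed point of $T$ in $(X,d)$, so $x^*$ is the unique one in $(X,D,K)$ as well.

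The step I expect to be the real obstacle is the contraction estimate: the naive argument using only $\frac14 D^p \le d \le D^p$ gives $d(Tx,Ty) \le 4\lambda^p d(x,y)$, whose ratio $4\lambda^p$ need not be less than $1$, and this loss is precisely what the chain-level argument above avoids. (One could instead pass to a power $T^N$ with $4\lambda^{Np} < 1$, apply Banach's principle to $T^N$, and recover the fixed point of $T$ from $Tx^* = T\bigl(T^N x^*\bigr) = T^N\bigl(Tx^*\bigr)$ together with uniqueness; but the chain estimate is cleaner and yields the sharp ratio $\lambda^p$.)
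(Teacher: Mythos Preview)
Your proof is correct and follows essentially the same approach as the paper: pass to the metric $d$ of Theorem~\ref{thm:pS2009}, use the chain definition~\eqref{eq:pS2009} (not the sandwich bound) to show $d(Tx,Ty)\le\lambda^p d(x,y)$, and then invoke the classical Banach principle in the complete metric space $(X,d)$. Your discussion of why the naive sandwich estimate fails and the $T^N$ workaround is a nice addition, but the core argument matches the paper's exactly.
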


\begin{proof} Let $p = \log_{2K}2$. Then $0 < p \le 1$ and $(2K)^p = 2$. So $d$ defined by~\eqref{eq:pS2009} is a metric on~$X$. Moreover, $\frac{1}{4} D^p \le d \le D^p$. Since $(X,D,K)$ is complete, $(X,d)$ is a complete metric space. For all $x_1 = x,$ $x_2, \ldots, x_{n+1} = y \in X$ and $n \in \mathbb{N}$ we have 
	
	\begin{eqnarray*} d(Tx,Ty)& =& \inf \left \{ \sum\limits_{ i =1}^n D^{p}(y_i,y_{i+1}): y_1 = Tx,y_2, \ldots, y_{n+1} = Ty \in X, n \in \mathbb{N} \right\} \nonumber \\ \nonumber & \le & \sum\limits_{ i =1}^n D^{p}(Tx_i, Tx_{i+1}) \\ &\le & \lambda^p \sum\limits_{ i =1}^n D^p(x_i,x_{i+1}).
	\end{eqnarray*} This implies that 
	\begin{eqnarray*}
		d(Tx,Ty) &\le& \lambda^p \inf \left \{ \sum\limits_{ i =1}^n D^{p}(x_i,x_{i+1}): x_1 = x,x_2, \ldots, x_{n+1} = y \in X, n \in \mathbb{N} \right\} \\
		&=& \lambda^pd(x,y).
	\end{eqnarray*} 
	Since $\lambda^p \in [0,1)$, $T$ is a contraction map on a complete metric space $(X,d)$. By the Banach contraction map principle on metric spaces, $T$ has a unique fixed point $x^*$ and $\lim\limits_{n \rightarrow \infty}T^nx= x^*$ in~$(X,d)$. Note that $\frac{1}{4} D^p \le d \le D^p$. Then $\lim\limits_{n \rightarrow \infty}T^nx= x^*$ in $(X,D,K)$. 
\end{proof}

Next, by using the formula~\eqref{eq:pS2009}, we calculate the corresponding metric $d$ induced by certain \mbox{$b$-metric} $D$ known in the literature. 
Two following $b$-metric spaces were usually used as ``interesting ones'' to prove the difference between the setting of $b$-metric and the setting of metric \cite[page~113]{KS2014-8}. By using Theorem~\ref{thm:aIN1998} and Theorem~\ref{thm:pS2009}, we can get corresponding metrics induced by these $b$-metrics as follows.

\begin{exam} \label{exam:lp} Let $0 <p \le 1$, and $$ \ell^p = \left\{ \{x_n\}: x _n \in \mathbb{R}, n \in \mathbb{N}, \sum \limits_{n =1}^{\infty}|x_n|^p<\infty \right\} $$ and $ D(x,y) = \left( \sum \limits_{n =1}^{\infty}|x_n - y_n|^p \right)^{\frac{1}{p}}$ for all $x = \left\{x_n\right\}, y = \left\{y_n\right\} \in \ell^p.$ Then $ D$ is a $b$-metric with the coefficient $K = 2^{\frac{1}{p}}$ \cite[Example~1.3]{BMV2011}. Define $ q = \frac{p}{p+1} $. Then $\left(2. 2^{\frac{1}{p}}\right)^q =2.$ By Theorem~\ref{thm:pS2009}, $(\ell^p, D,K)$ is metrizable by the metric $d$ defined by
	$$ d(x,y) = \inf \left\{ \sum\limits_{ i =1}^n D^{q}(x_i,x_{i+1}): x_1 = x,x_2, \ldots, x_{n+1} = y \in \ell^p, n \in \mathbb{N} \right\}. $$
	We find that $D^q(x,y) = \left( \sum \limits_{n =1}^{\infty}|x_n - y_n|^p \right)^{\frac{1}{p+1}} $ for all $x,y \in \ell^p$. Then $D^q$ is a metric on $\ell^p$. By Theorem~\ref{thm:aIN1998}, $d = D^q$. Then the corresponding metric $d$ is defined by $d(x,y) = \left( \sum \limits_{n =1}^{\infty}|x_n - y_n|^p \right)^{\frac{1}{p+1}} $ for all $x,y \in \ell^p$.
\end{exam} 

\begin{exam} Let $0 <p \le 1$, and $$ L^p[0,1] = \left\{ x: [0,1] \longrightarrow \mathbb{R} : \int _0^1|x(t)|^p dt <\infty \right\} $$ and $ D(x,y) = \left( \displaystyle \int _0^1|x(t) - y(t)|^p \right)^{\frac{1}{p}}$ for all $x,y \in L^p[0,1].$ Then $ D$ is a $b$-metric with the coefficient $K = 2^{\frac{1}{p}}$ \cite[Example~1.4]{BMV2011}. By a similar argument as in Example~\ref{exam:lp}, we get $(L^p[0,1],D,K)$ is metrizable by the metric $d$ defined by $d(x,y) = \left( \displaystyle \int _0^1|x(t) - y(t)|^p \right)^{\frac{1}{p+1}}$ for all $x,y \in L^p[0,1].$
\end{exam} 

Two following $b$-metric spaces play an important role in showing some different properties of $b$-metric spaces~\cite{aTD2015}. 
By using Theorem~\ref{thm:pS2009}, we can also get corresponding metrics induced by these $b$-metrics as follows.

\begin{exam}\label{159-99} Let $X = \left\{0,1, \frac{ 1}{2}, \ldots, \frac{ 1}{n}, \ldots \right\} $ and $$ D(x,y) = \left\{\begin{array}{ll}0 &\hbox{ if } x =y\\ 1 &\hbox{ if } x \ne y \in \{0,1 \}\\
	|x -y| & \hbox{ if } x \ne y \in \{ 0\} \cup \left\{\frac{ 1}{2n}:n \in \mathbb{N} \right\} \\
	4& \hbox{ otherwise. }
	\end{array}\right. $$
	Then $D$ is a $b$-metric on $X$. Note that, in \cite[Example~13]{KDH2013} and also in \cite[Example~3.9]{aTD2015}, the coefficient $K = \frac{ 8}{3}$ but this fact is not true since for all $n$,
	$$ 4 =D \left(1, \frac{1}{2n}\right) \le K \left[D \left(1, 0 \right) + D \left(0,\frac{ 1}{2n}\right)\right] = K \left( 1 +\frac{ 1}{2n}\right).$$ This implies $K \ge 4$. Reconsidering the calculation in \cite[Example~13]{KDH2013} we find that $D$ is exactly a $b$-metric with $K = 4$. Define $p = \frac{1}{3}$. Then $(2K)^{p} = 2.$ By using~\eqref{eq:pS2009}, we get the corresponding metric $d$ defined by
	\begin{equation*}\label{eq:} d(x,y) = d(y,x) = \begin{cases} 0 &\mbox{ if } x = y\\ 1 & \mbox{ if } x \ne y \in \{0,1\} \\
	|x-y|^{\frac{1}{3}} & \mbox{ if } x \ne y \in \{0\} \cup \left\{ \frac{1}{2n}: n \in \mathbb{N} \right\} 
	\\ 
	1 + \sqrt[3]{\frac{1}{2n}} & \mbox{ if } x =1, y \in \left\{\frac{1}{2n}: n \in \mathbb{N} \right\}\\
	\sqrt[3]{4} & \mbox{ otherwise. } 
	\end{cases} \end{equation*}
\end{exam}

\begin{exam} \label{159-87} Let $X = \left\{0,1, \frac{ 1}{2}, \ldots, \frac{ 1}{n}, \ldots \right\} $ and $$ D(x,y) = \left\{\begin{array}{ll}0 &\hbox{ if } x =y\\ 1 &\hbox{ if } x \ne y \in \{0,1 \}\\
	|x -y| & \hbox{ if } x \ne y \in \{ 0\} \cup \left\{\frac{ 1}{2n}:n \in \mathbb{N} \right\} \\
	\frac{ 1}{4}& \hbox{ otherwise. }
	\end{array}\right. $$
	Then $D$ is a $b$-metric on $X$ with $K = 4$ \cite[Example~3.10]{aTD2015}. By using~\eqref{eq:pS2009}, we get the corresponding metric $d$ defined by
	
	\begin{equation*}\label{eq:} d(x,y) = d(y,x) = \begin{cases} 0 &\mbox{ if } x = y\\ \sqrt[3]{\frac{1}{4}} & \mbox{ if } x \ne y \in \{0,1\} \\
	|x-y|^{\frac{1}{3}} & \mbox{ if } x \ne y \in \{0\} \cup \left\{ \frac{1}{2n}: n \in \mathbb{N} \right\} 
	\\ 
	\sqrt[3]{\frac{1}{4}} & \mbox{ otherwise. } 
	\end{cases} \end{equation*}
\end{exam}

Next, we calculate the corresponding metric induced by the $b$-metric in Example~\ref{185-091}.

\begin{exam} Let $X = \mathbb{R} $, and $D(x,y) = |x - y|^2$ for all $x,y \in X$ as in Example~\ref{185-091}. Then $D$ is a $b$-metric with the coefficient $K =2$. Define $p = \frac{1}{2}$. Then $(2K)^p =2$. It follows from~\eqref{eq:pS2009} that for any $x,y \in \mathbb{R}$, 
	\begin{eqnarray*} d(x,y) &=& \inf \left\{ \sum\limits_{ i =1}^n D^{ \frac{1}{2}}(x_i,x_{i+1}): x_1 = x,x_2, \ldots, x_{n+1} = y \in X, n \in \mathbb{N} \right\} \\
		&=& \inf \left\{ \sum\limits_{ i =1}^n |x_i -x_{i+1}|: x_1 = x,x_2, \ldots, x_{n+1} = y \in X, n \in \mathbb{N} \right\} \\
		& =& |x-y|.
	\end{eqnarray*} Then $d$ is again the usual metric in $\mathbb{R}.$
\end{exam} 

\begin{rem} From above examples, authors should be very carefully to work with fixed point theorems in $b$-metric spaces. 
	Note that $\ell^p$, $0 < p < 1$, with the quasi-norm defined by $ \|x\| = \left( \sum_{n =1}^{\infty} |x_n| ^p\right)^{\frac{1}{p}}$ for all $x = \{x_n\} \in \ell^p$ is a quasi-Banach space that is not normable \cite[page~1102]{NK2003}. The similar result also holds for $L^p[0,1]$. So authors may study the fixed point theory in quasi-Banach spaces. For interesting ways to extend fixed point theory in quasi-Banach spaces, the reader may refer to and use the ideas in~\cite{pB2015}, \cite{rZ2015} and references given there.
\end{rem}

\section{Applications to answering Berinde-Choban's questions}
\label{4}

In this section, we give answers to Question~\ref{185-00} and Question~\ref{con6.2} mentioned in Section~\ref{sec:1}. First, 
by using the technique in the proof of Corollary~\ref{185-88}, we give an affirmative answer to Question~\ref{185-00} as follows.

\begin{thm} \label{185-05} Let $(X,\sigma)$ be an $F$-distance space and $T:X \longrightarrow X$ be a map such that $\sigma(Tx,Ty) \le \lambda \sigma(x,y)$ for some $ \lambda \in [0,1)$ and all $x,y \in X$.
	Then
	\begin{enumerate}
		\item \label{185-05-1} The sequence $ \left\{x_n\right\}$ is Cauchy, where $x_{n+1} = Tx_n$ for all $n \in \mathbb{N}$ and some $x_1 \in X$.
		\item \label{185-05-2} There exists a unique fixed point of $T$ if the space $(X,\sigma)$ is complete.
	\end{enumerate}
\end{thm}

\begin{proof} \eqref{185-05-1}. For all $x,y \in X$, put $\rho(x,y) = \max\{\sigma(x,y), \sigma(y,x)\}$. Then $(X,\rho)$ is a space satisfying~\hyperlink{185-99}{$\mathrm{(I)}$}, \hyperlink{185-98}{$\mathrm{(II)}$} and~\hyperlink{185-95}{$\mathrm{(V)}$} and $\rho$ is equivalent to $\sigma$. By Corollary~\ref{185-88}, $(X,\rho)$ is metrizable and so is $(X,\sigma)$. We also find that for all $x,y \in X$, $$\rho(Tx,Ty) = \max\{\sigma(Tx,Ty), \sigma(Ty,Tx) \} \le \max\{ \lambda \sigma(x,y), \lambda \sigma(y,x)\} = \lambda \rho(x,y).$$
	
	Now, for all $n\in \mathbb{N}$, we have $$ \rho(x_{n+1},x_n) = \rho(Tx_n,Tx_{n-1}) \le \lambda \rho(x_n,x_{n-1}) \le \ldots \le \lambda^{n-1} \rho(x_2,x_1). $$
	This implies $\lim\limits_{n \rightarrow \infty} \rho(x_{n+1},x_n)= 0$. So there exists $n_0$ such that $\rho(x_{n+1},x_n) <1$ for all $n \ge n_0.$ By using notations $d$ and $D$ in the proof of Corollary~\ref{185-88} again, we find that for each $n \ge n_0$ there exists $k_n$ such that $r_{k_n} > \rho(x_{n+1},x_n) \ge r_{k_n+1}. $
	This implies $D(x_{n+1},x_n) \le \frac{1}{2^{k_n}}$. By using~\eqref{eq:99} we have $d(x_{n+1},x_n) \le D(x_{n+1},x_n)\le \frac{1}{2^{k_n}}$. So, for $m\ge n\ge n_0$, 
	$$
	d(x_n,x_m) \le d(x_n,x_{n+1}) + \ldots + d(x_{m-1},x_m) 
	\le \frac{1}{2^{k_n}} + \ldots + \frac{1}{2^{k_{m-1}}} 
	\le \sum\limits_{i = k_n}^{\infty}\frac{1}{2^i}.
	$$
	This implies $
	\lim \limits_{n,m \rightarrow \infty}d(x_n,x_m) = 0.$ Then $\{x_n\}$ is Cauchy in $(X,d)$. By Theorem~\ref{183-14}.\eqref{183-14-2}, $\{x_n\}$ is Cauchy in $(X,D)$. Now, for each $\varepsilon>0$, there exists $n_0$ such that $r_{n_0} \le \varepsilon$. Since $\{x_n\}$ is Cauchy in $(X,D)$, there exists $n_1$ such that $D(x_n,x_m) <\frac{1}{2^{n_0}}$ for all $n,m \ge n_1$. Therefore $\rho(x_n,x_m) \le r_{n_0} <\varepsilon$ for all $n,m \ge n_1$.
	This implies that $\{x_n\}$ is Cauchy in $(X, \rho).$ Since $\rho$ is equivalent to $\sigma$, we get that $\{x_n\}$ is Cauchy in $(X, \sigma).$
	
	\eqref{185-05-2}. If $(X,\sigma)$ is complete then there exists $x^{*}$ such that $\lim\limits_{n\rightarrow\infty}x_n = x^{*}$ in $(X,\sigma)$. Therefore $\lim \limits_{n \rightarrow \infty} \sigma(x_n,x ^*) = 0$. Note that for all $n$,
	$$ \sigma(Tx^{*},x_{n+1}) = \sigma(Tx^{*},Tx_{n}) \le \lambda \sigma(x^{*},x_n).
	$$ Letting $n \rightarrow \infty$ yields $\lim\limits_{n\rightarrow\infty} \sigma(Tx^{*},x_{n+1}) = 0$. Then $ \lim\limits_{n\rightarrow\infty} x_{n+1} = Tx^{*}$ in $(X,\sigma)$. Since $(X,\sigma)$ is metrizable by $d$, the limit of a convergent sequence in $(X,\sigma)$ is unique. This implies that $Tx^{*} = x^{*}$ and then $T$ has a fixed point. It is easy to see that the fixed point of $T$ is unique. 
\end{proof}

Recall that a \textit{symmetric distance} $\rho$ on a topological space $X$ is a function $\rho: X \times X \longrightarrow [0,\infty)$ satisfying~\hyperlink{185-99}{$\mathrm{(I)}$}, \hyperlink{185-98}{$\mathrm{(II)}$} and $A = \overline{A}$ if and only if $\rho(x,A) >0$ for any $x \not \in A$, where $\overline{A}$ is the closure of $A$ and $\rho(x,A) = \inf \{\rho(x,y): y \in A\}$ \cite[page~125]{AVA1966}. On Question~\ref{con6.2} Berinde and Choban asserted that any Hausdorff compact space with a symmetric distance is metrizable \cite[page~29]{bC2013-3}, also see the details proof at \cite[pages~126-127]{AVA1966}. The following example shows that there exists a Hausdorff compact space with a symmetric distance that is not coherent.

\begin{exam}
	There exists a Hausdorff compact space with a symmetric distance that is not coherent.
\end{exam}

\begin{proof}
	Let $X = \{ 0\} \cup \{ \frac{1}{n}: n \in \mathbb{N} \}$ and $$ \rho(x,y) = \rho(y,x) = \begin{cases}
	0 & \mbox{ if } x = y \\
	\frac{1}{2n} & \mbox{ if } (x,y) = \left( 0, \frac{1}{2n} \right) \\
	\frac{1}{2n +1} & \mbox{ if } (x,y) = \left( 1, \frac{1}{2n+1} \right) \\
	1 & \mbox{ if } (x,y) = \left( 0, \frac{1}{2n-1} \right) \mbox{ or } (x,y) = \left( 1, \frac{1}{2n} \right) \\
	|x -y| & \mbox{ otherwise.}\\
	\end{cases}$$
	Then $(X,\rho)$ is a Hausdorff compact space with the symmetric distance $\rho$, where the topology on $X$ is induced by its convergence with respect to $\rho$. We find that $$ \lim\limits_{n \rightarrow \infty} \rho \left( \frac{1}{2n}, \frac{1}{2n+1} \right)= \lim\limits_{n \rightarrow \infty} \left| \frac{1}{2n}- \frac{1}{2n+1} \right| = 0$$ and $\lim\limits_{n \rightarrow \infty} \rho \left( \frac{1}{2n}, 0 \right) = 0$. However, $\lim\limits_{n \rightarrow \infty} \rho \left( \frac{1}{2n+1}, 0 \right) = 1 \ne 0$. So $(X,\rho)$ is not coherent. 
\end{proof}

The following theorem is a partial answer to Question~\ref{con6.2}.

\begin{thm} \label{185-999} Let $(X,\rho)$ be a symmetric distance space and $T:X \longrightarrow X$ be a map such that
	$(X, \mathcal{T}(\rho))$ is Hausdorff compact, $\rho$ is coherent, and $\rho(Tx,Ty) \le \lambda \rho(x,y)$ for some $ \lambda \in [0,1)$ and all $x,y \in X$. Then $T$ has a unique fixed point.
\end{thm}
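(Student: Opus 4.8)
The plan is to reduce the problem to a compact metric space, show that the symmetric distance $\rho$ is compatible with its topology (this is where coherence is crucial), and then run a Picard iteration whose cluster points must be fixed points. Since $\rho$ is coherent it satisfies $\mathrm{(VI)}$ (the conditions $\mathrm{(VIa)}$, $\mathrm{(VIb)}$, $\mathrm{(VIc)}$ being equivalent), and, being a symmetric distance, it also satisfies $\mathrm{(I)}$ and $\mathrm{(II)}$. Hence Corollary~\ref{185-09} gives a metric $d$ on $X$ with $\mathcal{T}(d)=\mathcal{T}(\rho)$. As $(X,\mathcal{T}(\rho))$ is Hausdorff compact, $(X,d)$ is a compact metric space, so every sequence in $X$ has a $d$-convergent subsequence.

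The key lemma I would establish is that for a sequence $\{x_n\}$, $x_n\to x$ in $\mathcal{T}(\rho)$ if and only if $\rho(x_n,x)\to 0$. The implication $\rho(x_n,x)\to 0\Rightarrow x_n\to x$ needs only that $\rho$ is a symmetric distance: if $x\in U$ with $U$ open then $\rho(x,X\setminus U)>0$, and once $\rho(x,x_n)=\rho(x_n,x)$ is smaller than this number one has $x_n\in U$. For the converse I would use coherence to prove that $\overline{A}=\{y\in X:\rho(y,A)=0\}$ for every $A\subseteq X$; indeed this set contains $A$, it is contained in every closed set containing $A$, and it is closed because if $\rho(y,A)>0$ while $\rho(y,y_k)\to 0$ with $\rho(y_k,A)=0$, then choosing $a_k\in A$ with $\rho(y_k,a_k)\to 0$ and applying coherence gives $\rho(y,a_k)\to 0$, a contradiction. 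Granting this, if $x_n\to x$ topologically but some subsequence has $\rho(x_{n_k},x)\ge\varepsilon$ and $x_{n_k}\ne x$, then $x\in\overline{\{x_{n_k}:k\in\mathbb{N}\}}$ forces $\rho(x,x_{n_k})=0$ for some $k$, which is absurd. In particular $\rho(x_n,x)\to 0\Leftrightarrow d(x_n,x)\to 0$.

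Now fix $x_1\in X$ and put $x_{n+1}=Tx_n$. As in the proof of Theorem~\ref{185-05},
\[
\rho(x_{n+1},x_n)=\rho(Tx_n,Tx_{n-1})\le\lambda\,\rho(x_n,x_{n-1})\le\cdots\le\lambda^{n-1}\rho(x_2,x_1)\longrightarrow 0.
\]
By compactness of $(X,d)$ pick $z\in X$ and a subsequence with $x_{n_k}\to z$ in $(X,d)$, so $\rho(x_{n_k},z)\to 0$ by the key lemma. On one hand $\rho(x_{n_k+1},Tz)=\rho(Tx_{n_k},Tz)\le\lambda\,\rho(x_{n_k},z)\to 0$, hence $x_{n_k+1}\to Tz$ in $\mathcal{T}(\rho)$. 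On the other hand $\rho(z,x_{n_k})\to 0$ and $\rho(x_{n_k},x_{n_k+1})\to 0$, so coherence yields $\rho(z,x_{n_k+1})\to 0$, hence $x_{n_k+1}\to z$ in $\mathcal{T}(\rho)$. Since $(X,\mathcal{T}(\rho))$ is Hausdorff and $\{x_{n_k+1}\}$ converges to both $z$ and $Tz$, we get $Tz=z$. For uniqueness, if $Tz=z$ and $Tw=w$ then $\rho(z,w)=\rho(Tz,Tw)\le\lambda\,\rho(z,w)$, forcing $\rho(z,w)=0$ and $z=w$.

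The main obstacle is the second paragraph: in a general symmetric (semimetric) space topological convergence need not coincide with $\rho$-convergence, and it is exactly coherence that rescues this, via the identity $\overline{A}=\{y:\rho(y,A)=0\}$. Once that compatibility is in place everything else is a routine compactness-and-continuity argument — coherence is reused only to turn $x_{n_k}\to z$ together with $\rho(x_{n_k},x_{n_k+1})\to 0$ into $x_{n_k+1}\to z$, and the Hausdorff hypothesis is used only for the uniqueness of the limit of $\{x_{n_k+1}\}$. One could also bypass Corollary~\ref{185-09} and deduce sequential compactness of $(X,\mathcal{T}(\rho))$ directly from compactness and the same identity, but invoking the metrization theorem keeps the argument in line with the rest of the paper.
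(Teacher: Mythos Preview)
Your argument is correct and follows essentially the same route as the paper: extract a convergent subsequence of the Picard iterates by compactness, use the contraction to push the shifted subsequence to $Tz$, use coherence together with $\rho(x_{n+1},x_n)\to 0$ to push it to $z$, and conclude $Tz=z$ by Hausdorffness. The paper's proof is terser: it simply asserts that $(X,\mathcal{T}(\rho))$ is sequentially compact and that topological convergence of $T^{k_n}x$ to $x^*$ entails $\rho(T^{k_n}x,x^*)\to 0$, whereas you supply an explicit justification via Corollary~\ref{185-09} and your ``key lemma'' $\overline{A}=\{y:\rho(y,A)=0\}$; this extra care is a genuine improvement, since those two facts are not automatic for a bare symmetric distance and coherence is indeed what makes them work. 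One small slip: from $x\in\overline{\{x_{n_k}\}}$ you get $\inf_k\rho(x,x_{n_k})=0$, not $\rho(x,x_{n_k})=0$ for some $k$, but the contradiction with $\rho(x,x_{n_k})\ge\varepsilon$ stands.
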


\begin{proof} 
	For each $x \in X$, since $(X,\mathcal{T}(\rho))$ is sequentially compact, there exists $x^* \in X$ such that $\lim \limits_{n \rightarrow \infty} T^{k_n}x = x^*$ in $(X,\mathcal{T}(\rho))$. Then $\lim \limits_{n \rightarrow \infty} T^{k_n}x =x^* $ in $(X,\rho)$ and thus 
	\begin{equation}\label{eq:2821} \lim\limits_{n\rightarrow\infty} \rho(T^{k_n}x, x ^{*}) =0.
	\end{equation} We find that $ \rho(TT^{k_n}x,Tx^*) \le \lambda\rho(T^{k_n}x,x ^{*})$ for all $n$. Letting $n \rightarrow \infty$ and using~\eqref{eq:2821} we obtain
	\begin{equation}\label{eq:2822} \lim \limits_{n \rightarrow \infty} \rho( TT^{k_n}x, Tx ^*) = 0.
	\end{equation} 
	We also find that for all $n$, $$\rho(T^{n+1}x,T^nx) \le \lambda\rho(T^nx,T^{n-1}x) \le \ldots \le \lambda^n \rho(Tx,x).$$ This implies $ \lim\limits_{n\rightarrow\infty} \rho(TT^nx,T^nx) = 0$. Therefore
	\begin{equation}\label{eq:2823} \lim\limits_{n\rightarrow\infty} \rho(TT^{k_n}x,T^{k_n}x) = 0.
	\end{equation} 
	By~\eqref{eq:2822} and~\eqref{eq:2823}, since $\rho$ is coherent, we get 
	\begin{equation}\label{eq:2824} \lim\limits_{n\rightarrow\infty} \rho(T^{k_n}x,Tx ^{*}) = 0.
	\end{equation} From~\eqref{eq:2821} and~\eqref{eq:2824}, since $(X, \mathcal{\rho})$ is Hausdorff, we get $Tx ^{*} = x ^{*}.$ So $T$ has a fixed point. It is easy to see that the fixed point of $T$ is unique. 
\end{proof}

One may conjecture that Theorem~\ref{185-999} holds without the condition that $\rho$ being coherent. However, 
the following question, that is inspired by Nemytzki-Edelstein theorem in metric spaces and also by \cite[Conjecture~6.2]{bC2013-3}, is still open. 

\begin{ques} Let $(X,\rho)$ be a symmetric distance space, $(X, \mathcal{T}(\rho))$ be Hausdorff compact, and $T:X \longrightarrow X$ be a map such that
	$\rho(Tx,Ty) <\rho(x,y) 
	$ for all distinct $x,y \in X$. Does there exist a unique fixed point of $T$?
\end{ques}

\section{Applications to \mbox{$2$-generalized} metric spaces}
\label{5}

In this section, by using the idea in the proof of \cite[Theorem~2]{AHF1937}, we prove a metrization theorem for \mbox{$2$-generalized} metric spaces. Here the main difference is that assumptions of \cite[Theorem~2]{AHF1937} hold for all elements while the assumptions relating to \mbox{$2$-generalized} metric spaces in our result hold only for distinct elements. We also show that the Banach contraction principle in \mbox{$2$-generalized} metric spaces can be deduced from the Banach contraction principle in metric spaces.

We first show that there exists a \mbox{$2$-generalized} metric space that is not metrizable. 

\begin{exam} Let $(X,\rho)$ be the \mbox{$2$-generalized} metric space in \cite[Example~2.13]{KD2014}. Then there exists a convergent sequence having two limits. This implies that $(X,\rho)$ is not metrizable.
\end{exam} 

Recently Suzuki~\cite[Example~7]{TS2014-2} constructed an example of a \mbox{$2$-generalized} metric space $(X,\rho)$ that does not have any topology being compatible with $\rho$. Therefore, that \mbox{$2$-generalized} metric space $(X,\rho)$ is also not metrizable in the sense that the induced metric and given \mbox{$2$-generalized} metric having the same convergence of nets. Suzuki \emph{et al.}~\cite{sAK2015-2} proved that every 3-generalized metric space is metrizable and for any $\nu \ge 4$, and
not every $\nu$-generalized metric space has a compatible symmetric topology. 
Note that if the \mbox{$2$-generalized} metric $\rho$ is continuous in its variables then the \mbox{$2$-generalized} metric space $(X,\rho)$ is metrizable \cite[Corollary~2.6.(1)]{KD2014}. 
The following example shows that there exists a \mbox{$2$-generalized} metric space $(X,\rho)$ that is metrizable but $\rho$ is not continuous in its variables.

\begin{exam} Let $X = \left\{0,1, \frac{ 1}{2}, \ldots, \frac{ 1}{n}, \ldots \right\} $ and $$ \rho(x,y) = \rho(y,x) = \left\{\begin{array}{ll}0 &\hbox{ if } x =y\\ 
	\frac{1}{n} & \hbox{ if } x = 0, y = \frac{1}{n} \\
	2& \hbox{ otherwise. } 
	\end{array}\right. $$
	We will show that $\rho$ is a \mbox{$2$-generalized} metric. For all $x,y \in X$ it is clear that $\rho(x,y) \ge 0$, $\rho(x,y) = \rho(y,x)$; and $\rho(x,y) = 0$ if and only if $x = y$. For all $x,y \in X$ and $u \ne v \in X \setminus \{x,y\}$ we consider the following three cases.
	
	\textbf{Case 1.} $x = y$. Then $ \rho(x,u) + \rho(u,v) + \rho(v,y) \ge 0 =\rho(x,y).$ 
	
	\textbf{Case 2.} $x =0$ and $y = \frac{1}{n}$. Then $ v\ne 0$ and thus $$ \rho(x,u) + \rho(u,v) + \rho(v,y) \ge \rho(v,y) = 2 \ge \frac{1}{n} = \rho(x,y).$$
	
	\textbf{Case 3.} $x = \frac{1}{n} \ne y = \frac{1}{m}$. Then $ u \ne 0$ or $v \ne 0$. This implies that $$ \rho(x,u) + \rho(u,v) + \rho(v,y) \ge 2 = \rho(x,y).$$
	
	By the above, $\rho$ is a \mbox{$2$-generalized} metric on $X$. We find that $\lim\limits_{n\rightarrow\infty} \rho( \frac{1}{n}, 0) = \lim\limits_{n\rightarrow\infty} \frac{1}{n} =0$ in $\mathbb{R} $. Then $\lim\limits_{n\rightarrow\infty} \frac{1}{n} = 0$ in $(X,\rho)$. 
	However, $ \lim\limits_{n \rightarrow \infty} \rho\left(\frac{1}{n},1\right) = 2 \neq 1 = \rho(0,1) $ and thus $\rho$ is not continuous in its variables.
	
	Since $ \lim\limits_{n\rightarrow\infty} \frac{1}{n} = 0$ in $(X,\rho)$ and each point $\frac{1}{n}$ is isolated in $(X,\rho)$, we find that $(X,\rho)$ is metrizable by the usual metric $d$ on $X$. 
\end{exam} 

We next give a condition for the metrization of a \mbox{$2$-generalized} metric space.

\begin{thm} \label{185-02} Let $(X,\rho)$ be a \mbox{$2$-generalized} metric space such that the limit of a convergent sequence is unique.~Then 
	\begin{enumerate}
		\item \label{185-02-1} There exists a metric $d$ on $X$ such that $\lim \limits_{n \rightarrow \infty} x_n = x$ in $(X,\rho)$ if and only if $\lim \limits_{n \rightarrow \infty} x_n = x$ in $(X,d)$. In particular, $(X,\rho)$ is metrizable by the metric $d$.
		
		\item \label{185-02-2} A sequence $\{x_n\}$ is Cauchy in $(X,\rho)$ if and only if it is Cauchy in $(X,d)$. In particular, $(X,\rho)$ is complete if and only if $(X,d)$ is complete.
	\end{enumerate}
\end{thm}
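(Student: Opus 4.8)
The plan is to bypass the fact that $\rho$ itself behaves badly under a chain construction — already the hypothesis of Theorem~\ref{183-14} fails, and a direct chain infimum formed from $\rho$ may collapse to $0$, as in Example~\ref{185-091} — by first replacing $\rho$ with an auxiliary symmetric distance $D$ on $X$. The goal is to produce $D$ satisfying \hyperlink{185-99}{$\mathrm{(I)}$}, \hyperlink{185-98}{$\mathrm{(II)}$} and \hyperlink{185-96}{$\mathrm{(IV)}$} and having \emph{exactly} the same convergent sequences (with the same limits) and the same Cauchy sequences as $\rho$. Granting this, Theorem~\ref{183-14} together with Remark~\ref{23}, applied to $(X,D)$, yields a metric $d$ on $X$ with $\frac14 D \le d \le D$, hence with the same convergent and Cauchy sequences as $D$ and therefore as $\rho$, and with $(X,D)$ metrizable by $d$; this is exactly what parts~\eqref{185-02-1} and~\eqref{185-02-2} assert.

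To build $D$ I would adapt the discretization used in the proof of Corollary~\ref{185-09}: for each $x \in X$ fix a strictly decreasing sequence of radii $r_1(x) > r_2(x) > \cdots \to 0$ produced by iterating a point-dependent modulus, put $V_n(x) = B(x, r_n(x), \rho)$ and $\mathcal{G}_n = \{V_n(x) : x \in X\}$, and define $D(a,b) = D(b,a)$ to be $0$ if $a = b$, to be $1$ if $\{a,b\}$ is contained in no member of any $\mathcal{G}_n$, and to be $2^{-n}$ with $n = \max\{k : \{a,b\} \subset G \text{ for some } G \in \mathcal{G}_k\}$ otherwise; this is the distance that appears in the proof of Corollary~\ref{185-10}. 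One then has to verify the three conditions of Corollary~\ref{185-10} for the families $\mathcal{G}_n$: the separation condition and the neighbourhood-base condition, whose verification is where the hypothesis that limits in $(X,\rho)$ are unique is used (it is what makes the $V_n(x)$ a genuine neighbourhood base at $x$ and forces distinct points to be eventually separated by the $\mathcal{G}_n$), and the nesting condition, whose verification is where the $2$-generalized triangle inequality enters.

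The main obstacle, and the reason this is not a mere transcription of Frink's proof, is exactly the point emphasized in the section: the inequality available for $\rho$ reads $\rho(x,y) \le \rho(x,u) + \rho(u,v) + \rho(v,y)$ only when $u$ and $v$ are distinct and lie outside $\{x,y\}$, so the one-step estimate ``$\rho(x,u)$ small and $\rho(u,y)$ small $\Rightarrow$ $\rho(x,y)$ small'' that normally drives the nesting condition (and hence~\hyperlink{185-96}{$\mathrm{(IV)}$} for $D$) is not directly at hand — a genuine \emph{second}, \emph{new} intermediate point is required. I would handle this by a case analysis on the common intermediate point $u$: if $u$ is isolated in $(X,\rho)$ then $V_n(u) = \{u\}$ for all large $n$ and nothing is to prove; if $u$ is not isolated, one recovers the missing intermediate point from a sequence converging to $u$ (or to $y$), applies the $2$-generalized inequality with two terms of that sequence as the pair of intermediates, and passes to the limit. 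The delicate bookkeeping is to arrange the iteration of the modulus so that these estimates are uniform enough that $D$ satisfies~\hyperlink{185-96}{$\mathrm{(IV)}$}; uniqueness of limits is invoked precisely to rule out the pathology of a sequence with two limits (the first example of the section), which would otherwise wreck the argument.

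For part~\eqref{185-02-2} it remains to check that a sequence is Cauchy in $(X,\rho)$ if and only if it is Cauchy in $(X,D)$, which is done by the same comparison of $\rho$-balls with $D$-balls used for convergence in the proof of Corollary~\ref{185-88}, the extra care being to choose the point-dependent radii $r_n(\cdot)$ so that they do not degenerate along a Cauchy sequence. Granting this, Theorem~\ref{183-14} gives that $\{x_n\}$ is Cauchy in $(X,D)$ if and only if it is Cauchy in $(X,d)$, so $\rho$ and $d$ have the same Cauchy sequences, and together with part~\eqref{185-02-1} this shows $(X,\rho)$ is complete if and only if $(X,d)$ is.
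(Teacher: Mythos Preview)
Your overall plan---replace $\rho$ by an auxiliary $D$ satisfying \hyperlink{185-99}{$\mathrm{(I)}$}, \hyperlink{185-98}{$\mathrm{(II)}$}, \hyperlink{185-96}{$\mathrm{(IV)}$}, then invoke Theorem~\ref{183-14}---is exactly the paper's strategy, and the uniqueness-of-limits hypothesis is used just where you say. But the heart of the argument, your verification of the nesting condition~/~condition~\hyperlink{185-96}{$\mathrm{(IV)}$} for $D$, has a genuine gap, and the surrounding choices make it worse rather than better.

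First, the point-dependent radii $r_n(x)$ are a needless complication: there is no modulus $\phi(a,\varepsilon)$ given a priori for a $2$-generalized metric, so it is unclear what you would iterate. The paper instead takes the \emph{uniform} radii $r_n = 3^{-n}$, and this choice is not cosmetic---the factor $3$ matches the three terms in the $2$-generalized inequality and is what makes the arithmetic close. Second, and more seriously, your mechanism for producing the ``missing'' second intermediate point does not work. Passing to the limit along a sequence $v_n \to u$ requires control of terms like $\rho(x,v_n)$ or $\rho(v_n,y)$, and you only know $\rho(x,u)$ and $\rho(u,y)$ are small; getting from one to the other is precisely the triangle inequality you do not have, and $\rho$ need not be continuous in its variables (the paper gives an example). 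The correct observation, which you are missing, is that no limit is needed: if $D(a,b)\le 2^{-n}$ and $D(b,c)\le 2^{-n}$ then there exist \emph{centres} $x,y$ with $\{a,b\}\subset U_n(x)$ and $\{b,c\}\subset U_n(y)$, and the five points $a,b,c,x,y$ already supply enough distinct intermediates. A short case analysis on coincidences among them lets you apply the $2$-generalized inequality directly---e.g.\ when all five are distinct, $\rho(a,y)\le \rho(a,x)+\rho(x,b)+\rho(b,y) < 3\cdot 3^{-n}=3^{-(n-1)}$, so $\{a,c\}\subset U_{n-1}(y)$ and $D(a,c)\le 2^{-(n-1)}$. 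Once this is in hand, the Cauchy equivalence in part~\eqref{185-02-2} is a routine $\varepsilon$-comparison with no need for the ``non-degeneration'' caveat you mention.
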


\begin{proof} 
	\eqref{185-02-1}. For any $a \in X$ and $n \ge 0$, define $ U_n(a) = \left\{x \in X: \rho(a,x) <\frac{1}{3^n}\right\}. $
	Let $a,b \in X$. If for each $n\ge 0$ there exists $y_n \in X$ such that $\{a,b\} \subset U_n(y_n)$, then $\rho(y_n,a) <\frac{1}{3^n}$ and $\rho(y_n,b) <\frac{1}{3^n}$ for all $ n \ge 0$. Letting $n \rightarrow \infty$ we find that $ \lim \limits_{n \rightarrow \infty} \rho(y,a) = \lim \limits_{n \rightarrow \infty} \rho(y,b) = 0.$ This implies $ \lim \limits_{n \rightarrow \infty} y_n = a$ and $\lim \limits_{n \rightarrow \infty} y_n = b$, and thus $a =b$. So, for $a \ne b$, there exists $n$ such that $\{a,b\} \not \subset U_n(y)$ for all $y \in X$. Moreover, if $n \le m$ then $U_n(y) \supset U_m(y)$ for all $y \in X$.
	So we can define a function $D: X \times X \longrightarrow [0,\infty)$ as follows $$ D(a,b) = \begin{cases} 0 &\mbox{ if } a = b \\ \frac{1}{2^k} & \mbox{ if } a \ne b, k = \min \left\{n: \{a,b \} \not \subset U_m(y) \mbox{ for all } y \in X, m \ge n \right\}. \end{cases} $$
	It is clear that $D$
	satisfies~\hyperlink{185-99}{$\mathrm{(I)}$} and~\hyperlink{185-98}{$\mathrm{(II)}$}. We shall prove that $D$ satisfies~\hyperlink{185-96}{$\mathrm{(IV)}$}. By Remark~\ref{23}, it is sufficient to show that for each $\varepsilon >0$ and all distinct elements $a,b,c$, if $D(a,b) \le \varepsilon$ and $D(b,c) \le \varepsilon$ then $D(a,c) \le 2 \varepsilon$. 
	
	Indeed, if $\varepsilon > \frac{1}{2}$ then $D(a,c) \le 1 <2 \varepsilon$. If $ \varepsilon \le \frac{1}{2}$ then there exists $n\in \mathbb{N}$ such that $D(a,b) \le \frac{1}{2^n} \le \varepsilon$ and $D(b,c) \le \frac{1}{2^n} \le \varepsilon$. Then there exist $x,y \in X$ such that $\{a,b\} \subset U_n(x)$ and $\{b,c\} \subset U_n(y)$. If $x =y$ then $\{a,c\} \subset U_n(x)$. This implies $D(a,c) \le \frac{1}{2^n} \le \varepsilon \le 2 \varepsilon.$
	If $x \ne y$ and $a = y $ or $c = x$ then $\{a,c \} \subset U_n(a)$. This implies $D(a,c) \le \frac{1}{2^n} \le \varepsilon <2 \varepsilon.$
	
	If $x \ne y$ and $a \ne y $, $c \ne x$, then we consider the following four cases.

	\textbf{Case 1.} \textit{$a = x$ and $c = y$.} 
	Then $b \in U_n(a)$ and $b \in U_n(c)$. This implies $\{a,c \}\subset U_n(b)$. So $D(a,c) \le \frac{1}{2^n} \le\varepsilon <2 \varepsilon$.
	
	\textbf{Case 2.} \textit{$a = x$ and $c \ne y$.} If $b \ne y$ then $$ \rho(a,c) \le \rho(a,b) + \rho(b,y) + \rho(y,c) <\frac{1}{3^n} + \frac{1}{3^n} + \frac{1}{3^n} = \frac{3}{3^n} = \frac{1}{3^{n-1}}.$$ So $ \left\{a,c\right\} \subset U_{n-1}(c)$. This implies $D(a,c) \le \frac{1}{2^{n-1}} \le 2 \varepsilon$. 
	If $b = y$ then $\{a,c\} \subset U_n(b)$. This implies $D(a,c) \le \frac{1}{2^n} \le \varepsilon <
	2 \varepsilon.$
	
	\textbf{Case 3.} \textit{$a \ne x$ and $c = y$.} If $b \ne x$ then $$ \rho(a,c) \le \rho(a,x) + \rho(x,b) + \rho(b,c) <\frac{1}{3^n} + \frac{1}{3^n} + \frac{1}{3^n} = \frac{3}{3^n} = \frac{1}{3^{n-1}}.$$ So $ \left\{a,c\right\} \subset U_{n-1}(c)$. This implies $D(a,c) \le \frac{1}{2^{n-1}} \le 2 \varepsilon$. 
	If $b = x$ then $\{a,c\} \subset U_n(b)$. This implies $D(a,c) \le \frac{1}{2^n} \le \varepsilon <
	2 \varepsilon.$

	\textbf{Case 4.} \textit{$a \ne x$ and $c \ne y$.} If $b =x$ then $$ \rho(a,c) \le \rho(a,b) + \rho(b,y) + \rho(y,c) <\frac{1}{3^n} + \frac{1}{3^n} + \frac{1}{3^n} = \frac{3}{3^n} = \frac{1}{3^{n-1}}.$$ So $ \left\{a,c\right\} \subset U_{n-1}(c)$. This implies $D(a,c) \le \frac{1}{2^{n-1}} \le2 \varepsilon$. If $b =y$ then $$ \rho(a,c) \le \rho(a,x) + \rho(x,b) + \rho(b,c) <\frac{1}{3^n} + \frac{1}{3^n} + \frac{1}{3^n} = \frac{3}{3^n} = \frac{1}{3^{n-1}}.$$ So $ \left\{a,c\right\} \subset U_{n-1}(c)$. This implies $D(a,c) \le \frac{1}{2^{n-1}} \le2 \varepsilon$. If $b \ne x$ and $b \ne y$ then $a,b,c,x,y$ are distinct. So $$ \rho(a,y) \le \rho(a,x) + \rho(x,b) + \rho(b,y) <\frac{1}{3^n} + \frac{1}{3^n} + \frac{1}{3^n} = \frac{3}{3^n} = \frac{1}{3^{n-1}}.$$ Therefore $\rho(a,y) <\frac{1}{3^{n-1}}$. Note that $\rho(c,y) <\frac{1}{3^n}<\frac{1}{3^{n-1}}$. This implies $\{a,c\} \subset U_{n-1}(y)$. Then $D(a,c) \le \frac{1}{2^{n-1}}\le2 \varepsilon$. 
	
	By the above four cases, we get that $D$ satisfies~\hyperlink{185-96}{$\mathrm{(IV)}$}. So $D$ satisfies~\hyperlink{185-99}{$\mathrm{(I)}$}, \hyperlink{185-98}{$\mathrm{(II)}$} and~\hyperlink{185-96}{$\mathrm{(IV)}$}. By Theorem~\ref{183-14}, there exists a metric $d$ on $X$ such that $\lim \limits_{n \rightarrow \infty} x_n = x$ in $(X, D)$ if and only if $\lim \limits_{n \rightarrow \infty} x_n = x$ in $(X,d)$. We check at once that $ \lim\limits_{n\rightarrow\infty} x_n = x$ in $(X,\rho)$ if and only if $\lim\limits_{n\rightarrow\infty} x_n = x$ in $(X,D)$. Therefore $\lim \limits_{n \rightarrow \infty} x_n = x$ in $(X, D)$ if and only if $\lim \limits_{n \rightarrow \infty} x_n = x$ in $(X,d)$. In particular, $(X,\rho)$ is metrizable by the metric $d$ which is defined by~\eqref{183-13}.
	
	\eqref{185-02-2}. We will check that $ \{x_n\}$ is Cauchy in $(X,\rho)$ if and only if $ \{x_n\}$ is Cauchy in $(X,D)$. Indeed, if $ \{x_n\}$ is Cauchy in $(X,\rho)$, then $\lim\limits_{n,m\rightarrow\infty} \rho(x_n,x_m) = 0$. For each $\varepsilon >0$, there exists $n_0$ such that $ \cfrac{1}{2^{n_0}} < \varepsilon$. There also exists $n_1$ such that $\rho(x_n,x_m) < \cfrac{1}{3^{n_0}} $ for all $n,m \ge n_1$. Since $\rho(x_n,x_m) < \cfrac{1}{3^{n_0}} $, $\{x_n,x_m\} \subset U_{n_0}(x_m)$. So $D(x_n,x_m) \le \cfrac{1}{2^{n_0}} < \varepsilon$ for all $n,m \ge n_1$. Therefore $\lim\limits_{n\rightarrow\infty} D(x_n,x_m) = 0$. This implies that $\{x_n\}$ is Cauchy in $(X,D)$. 
	
	Next, if $ \{x_n\}$ is Cauchy in $(X,D)$, then $\lim\limits_{n,m\rightarrow\infty} D(x_n,x_m) = 0$. For each $\varepsilon >0$, there exists $n_0$ such that $\cfrac{1}{3^{n_0}}< \varepsilon$. There also exists $n_1$ such that $D(x_n,x_m) \le \cfrac{1}{2^{n_0}} $ for all $n,m \ge n_1$. So $\{x_n,x_m\} \subset U_{n_0}(x_m)$ for all $n,m \ge n_1$. Therefore $\rho(x_n,x_m) \le \cfrac{1}{3^{n_0}} < \varepsilon$ for all $n,m \ge n_1$. Thus  $\lim\limits_{n\rightarrow\infty} \rho(x_n,x_m) =0 $. This implies that $\{x_n\}$ is Cauchy in $(X,\rho)$.

	By the above, $ \{x_n\}$ is Cauchy in $(X,\rho)$ if and only if $ \{x_n\}$ is Cauchy in $(X,D)$.
	By Theorem~\ref{183-14}.\eqref{183-14-2}, $ \{x_n\}$ is Cauchy in $(X,D)$ if and only if $ \{x_n\}$ is Cauchy in $(X,d)$. So $ \{x_n\}$ is Cauchy in $(X,\rho)$ if and only if $ \{x_n\}$ is Cauchy in $(X,d)$. By~\eqref{185-02-1} we get that $(X,\rho)$ is complete if and only if $(X,d)$ is complete.
\end{proof} 

\begin{cor} \label{52} Let $(X,\rho)$ be a \mbox{$2$-generalized} metric space. Then 
	\begin{enumerate} \item \label{52-1} If $\rho$ is continuous in its variables then $(X,\rho)$ is metrizable.
		\item \label{52-2} $(X,\rho)$ is metrizable if and only if the limits of a convergent sequence in $(X,\rho)$ is unique. 
	\end{enumerate} 
\end{cor}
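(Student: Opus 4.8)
The plan is to obtain both parts as immediate consequences of Theorem~\ref{185-02}, whose sole hypothesis is that a convergent sequence in $(X,\rho)$ has a unique limit. So in each case the only thing to check is that this hypothesis holds (or, for one implication of part~\eqref{52-2}, that it is automatic).

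For part~\eqref{52-1}, suppose $\rho$ is continuous in its variables and that a sequence $\{x_n\}$ converges in $(X,\rho)$ both to $x$ and to $y$, that is, $\rho(x_n,x)\to 0$ and $\rho(x_n,y)\to 0$. Since $\rho(y,y)=0$, the constant sequence $y$ converges to $y$ in $(X,\rho)$, so continuity of $\rho$ applied to $x_n\to x$ and $y\to y$ gives $\rho(x_n,y)\to\rho(x,y)$. On the other hand $\rho(x_n,y)=\rho(y,x_n)\to 0$ because $x_n\to y$. Hence $\rho(x,y)=0$, so $x=y$, and the limit of a convergent sequence in $(X,\rho)$ is unique. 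Theorem~\ref{185-02}.\eqref{185-02-1} then yields that $(X,\rho)$ is metrizable.

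For part~\eqref{52-2}, the ``if'' direction is precisely Theorem~\ref{185-02}.\eqref{185-02-1}. For the ``only if'' direction, assume $(X,\rho)$ is metrizable; then there is a metric $d$ on $X$ with $\lim_{n\to\infty}x_n=x$ in $(X,\rho)$ if and only if $\lim_{n\to\infty}x_n=x$ in $(X,d)$. As limits of convergent sequences in a metric space are unique, the same holds in $(X,\rho)$.

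I do not anticipate a real obstacle here: the argument is a short deduction from Theorem~\ref{185-02}. The only points demanding a little care are the precise reading of ``continuous in its variables'' and of ``metrizable'' in the $2$-generalized setting (where $\rho$-convergence need not come from a topology), and the observation that the auxiliary constant sequence is $\rho$-convergent, which holds because $\rho(y,y)=0$.
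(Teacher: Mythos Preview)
Your proof is correct and follows essentially the same route as the paper: both parts are reduced to Theorem~\ref{185-02}, with part~\eqref{52-1} handled by first checking that continuity of $\rho$ forces uniqueness of limits. The paper's argument is in fact terser than yours---it simply asserts that continuity implies unique limits and that part~\eqref{52-2} is a direct consequence of Theorem~\ref{185-02}---so your explicit verification of the uniqueness-of-limits step and of the ``only if'' direction in~\eqref{52-2} just fills in details the paper leaves to the reader.
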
 

\begin{proof} \eqref{52-2} is a direct consequence of Theorem~\ref{185-02}. We only need to prove~\eqref{52-1}. Since $\rho$ is continuous in its variables, the limit of a convergent sequence in $(X,\rho)$ is unique. By Theorem~\ref{185-02}.\eqref{185-02-1}, $(X,\rho)$ is metrizable.
\end{proof}

By using the metrization technique of a \mbox{$2$-generalized} metric space presented in the proof of Theorem~\ref{185-02} we reprove the Banach contraction principle on \mbox{$2$-generalized} metric spaces as follows. Note that Suzuki \emph{et al.}~\cite{sAK2015} also studied Banach contraction principle and some other fixed point results in $\nu$-generalized metric spaces. Moreover, the Hausdorff property of a \mbox{$2$-generalized} metric space was used in the proof of \cite[Theorem~2.1]{AB2000} though it is a confusion, see also \cite[Remark~2.12]{KD2014}.

\begin{thm}[\cite{AB2000}, Theorem~2.1] \label{thm:Bcpgms} Let $(X,\rho)$ be a Hausdorff complete \mbox{$2$-generalized} metric space and $T:X \longrightarrow X$ be a map such that $\rho(Tx,Ty) \le \lambda \rho(x,y)$ for all $x,y \in X$ and some $\lambda \in \left[0, 1 \right)$.
	Then $T$ has a unique fixed point $x^*$, and $\lim\limits_{n \rightarrow \infty} T^n x = x^*$ for all $x \in X$.
\end{thm}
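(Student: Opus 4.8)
The plan is to run the Picard iteration directly in $(X,\rho)$, prove it is Cauchy by means of the quadrilateral inequality, and then invoke Theorem~\ref{185-02} so that completeness of $(X,\rho)$ and uniqueness of sequential limits become available exactly as in a metric space; the fixed point then emerges in the same way as in the metric Banach contraction principle. Fix $x_0\in X$ and put $x_n=T^nx_0$. First I would dispose of the degenerate case: if $x_n=x_m$ for some $n<m$, then the orbit is eventually periodic with period $p=m-n$, and $\rho(x_n,x_{n+1})=\rho(T^px_n,T^px_{n+1})\le\lambda^p\rho(x_n,x_{n+1})$ forces $\rho(x_n,x_{n+1})=0$, i.e.\ $x_n$ is a fixed point. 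So from now on I may assume the iterates $x_0,x_1,x_2,\dots$ are pairwise distinct.

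Next I would record the two basic geometric estimates obtained by iterating the contraction condition, namely $\rho(x_n,x_{n+1})\le\lambda^n\rho(x_0,x_1)$ and $\rho(x_n,x_{n+2})\le\lambda^n\rho(x_0,x_2)$, so that both tend to $0$. The key step is to show $\{x_n\}$ is Cauchy in $(X,\rho)$. Since all iterates are distinct, for $m\ge n+3$ the points $x_{n+1},x_{n+2}$ lie in $X\setminus\{x_n,x_m\}$ and are distinct, so the quadrilateral inequality gives $\rho(x_n,x_m)\le\rho(x_n,x_{n+1})+\rho(x_{n+1},x_{n+2})+\rho(x_{n+2},x_m)$; iterating this (the gap $m-n$ dropping by $2$ at each stage) and using the two estimates above for the base cases $m-n\in\{1,2,3\}$, one bounds $\rho(x_n,x_m)$ by $\frac{\lambda^n}{1-\lambda}\rho(x_0,x_1)+\lambda^{m-2}\rho(x_0,x_2)$, uniformly in $m>n$. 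Hence $\{x_n\}$ is Cauchy, and by completeness of $(X,\rho)$ there is $x^*\in X$ with $x_n\to x^*$.

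Now the metrization enters: since $(X,\rho)$ is Hausdorff, the limit of a convergent sequence is unique, so Theorem~\ref{185-02} applies and $(X,\rho)$ is metrizable by a complete metric $d$ having the same convergent and the same Cauchy sequences; from here on every step is the corresponding step of the metric Banach contraction principle. From $\rho(Tx^*,x_{n+1})=\rho(Tx^*,Tx_n)\le\lambda\rho(x^*,x_n)\to0$ we get $x_{n+1}\to Tx^*$, and uniqueness of the limit forces $Tx^*=x^*$; thus $T$ has a fixed point. Rerunning the argument from an arbitrary starting point produces a fixed point which, by uniqueness, must equal $x^*$, so $T^nx\to x^*$ for every $x$. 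Uniqueness of the fixed point is immediate: $Ty^*=y^*$ gives $\rho(x^*,y^*)=\rho(Tx^*,Ty^*)\le\lambda\rho(x^*,y^*)$, whence $\rho(x^*,y^*)=0$.

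The main obstacle is the Cauchy step: the absence of a genuine triangle inequality means one cannot telescope $\rho(x_n,x_m)$ directly, and the quadrilateral inequality is legitimate only when the two intermediate points are distinct from the endpoints and from each other --- which is precisely why the reduction to pairwise-distinct iterates has to be carried out first. A second, historically delicate point is the passage from $x_{n+1}\to Tx^*$ and $x_n\to x^*$ to $Tx^*=x^*$; this is exactly where the Hausdorff hypothesis (equivalently, the metrizability furnished by Theorem~\ref{185-02}) is needed, and it is the step at which the original argument in \cite{AB2000} was incomplete.
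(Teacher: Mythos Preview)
Your argument is correct, but the Cauchy step follows a genuinely different route from the paper's. You establish that $\{x_n\}$ is Cauchy \emph{directly in} $(X,\rho)$ by first reducing to pairwise distinct iterates and then telescoping via the quadrilateral inequality, obtaining an explicit bound $\rho(x_n,x_m)\le \frac{\lambda^n}{1-\lambda}\rho(x_0,x_1)+\lambda^{m-2}\rho(x_0,x_2)$. The paper, by contrast, never uses the quadrilateral inequality for this step: it pushes the whole Cauchy argument into the auxiliary metric $d$ built in Theorem~\ref{185-02}. From $\rho(x_{n+1},x_n)\to 0$ it extracts integers $k_n\to\infty$ with $\{x_n,x_{n+1}\}\subset U_{k_n}(a)$, hence $d(x_{n+1},x_n)\le D(x_{n+1},x_n)\le 2^{-k_n}$, and then telescopes with the genuine triangle inequality for $d$ to get $d(x_n,x_m)\le\sum 2^{-k_j}$; Cauchyness in $(X,\rho)$ is then read off from Theorem~\ref{185-02}.\eqref{185-02-2}. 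Only the final identification $Tx^*=x^*$ via uniqueness of sequential limits is handled the same way in both proofs.

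What each approach buys: yours is closer to Branciari's original and is self-contained for the Cauchy step, at the price of the distinctness case split and a parity-dependent telescoping. The paper's approach is the one the section is written to showcase---it literally deduces the Banach contraction principle in $2$-generalized metric spaces from its metric counterpart through Frink's metrization---so it avoids the distinctness bookkeeping entirely and never iterates the $2$-generalized inequality. Your invocation of Theorem~\ref{185-02} only for the limit-uniqueness step is legitimate but somewhat incidental to your argument; in the paper that theorem is doing the heavy lifting already in the Cauchy step.
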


\begin{proof} Let $x = x_0 \in X$ and $x_{n+1} = Tx_n$ for all $n \in \mathbb{N}$. We find that
	$$ \rho(x_{n+1},x_n) = \rho(Tx_n,Tx_{n-1}) \le \lambda \rho(x_n,x_{n-1}) \le \ldots \le \lambda^n \rho(x_1,x_0). $$
	This implies $\lim \limits_{n \rightarrow \infty} \rho(x_{n+1},x_n) =0$. So there exists $n_0$ such that $\rho(x_{n+1},x_n) \le \frac{1}{3}$ for all $n \ge n_0$. By using again notations in the proof of Theorem~\ref{185-02}, we find that, for each $n \ge n_0$, there exist $k_n$ and $a \in X$ such that $\{x_{n+1},x_n\} \subset U_{k_n}(a)$. This implies $ D(x_{n+1},x_n) \le \frac{1}{2^{k_n}}$. By~\eqref{eq:99} we have $d(x_{n+1},x_n) \le D(x_{n+1},x_n) \le \frac{1}{ 2^{k_n}}$. So, for $m\ge n\ge n_0$, 
	$$
	d(x_n,x_m) \le d(x_n,x_{n+1}) + \ldots + d(x_{m-1},x_m) 
	\le \frac{1}{ {2}^{k_n}} + \ldots + \frac{1}{ {2}^{k_{m-1}}} 
	\le \sum\limits_{i = k_n}^{\infty}\frac{1}{ {2}^i}.
	$$
	This implies $\lim \limits_{n,m \rightarrow \infty}d(x_n,x_m) = 0.$ Then $\{x_n\}$ is Cauchy in $(X,d)$. By Theorem~\ref{185-02}.\eqref{185-02-2}, $\{x_n\}$ is Cauchy in $(X,\rho)$. Since $(X,\rho)$ is complete, there exists $x^*$ such that $\lim \limits_{n \rightarrow \infty} x_n =x^*$ in $(X,\rho)$. Note that, for all $n$,
	\begin{equation*}\label{eq:Bcpgms2} \rho(Tx^{*},x_{n+1}) = \rho(Tx^{*},Tx_{n}) \le \lambda \rho(x^{*},x_n).
	\end{equation*} This implies $\lim\limits_{n\rightarrow\infty} \rho(Tx^{*},x_{n+1}) = 0$. So $ \lim\limits_{n\rightarrow\infty} x_{n+1} = Tx^{*}$ in $(X,\rho)$. By Theorem~\ref{185-02}.\eqref{185-02-1}, $(X,\rho)$ is metrizable, so the limit of a convergent sequence in $(X,\rho)$ is unique. Then $Tx^{*} = x^{*}$, that is, $T$ has a fixed point. It is easy to see that the fixed point of $T$ is unique. 
\end{proof} 

Finally, by using the technique used in the proof of Theorem~\ref{185-02}, we calculate the corresponding metric $d$ induced by the first \mbox{$2$-generalized} metric space \cite[3.~Example]{AB2000} as follows.

\begin{exam} Let $X = \{a,b,c,e\}$ and $$ \rho(x,y) =\rho(y,x) = \begin{cases} 0 &\mbox{ if } x =y\\ 3 & \mbox{ if } (x,y) = (a,b)\\ 1 & \mbox{ if } (x,y) \in \left\{ (a,c), (b,c)\right\}\\ 2 & \mbox{ otherwise.} \end{cases} $$
	Then $(X,\rho)$ is a \mbox{$2$-generalized} metric space and $\rho$ is not a metric \cite[3.~Example]{AB2000}. By using again notations in the proof of Theorem~\ref{185-02} we find that $x \not \in U_0(y)$ for all $x \ne y$. Therefore $$ D(x,y) = D(y,x) = \begin{cases} 0 &\mbox{ if } x =y \\ 1 & \mbox{ otherwise.} \end{cases} $$
	Note that $D$ is a metric on $X$. So $d = D$.
\end{exam} 

\section*{Acknowledgment} 
The author are greatly indebted to anonymous reviewers for their helpful comments to revise the paper; to Prof. V.~Berinde for his communication on \cite[Conjecture~6.2]{bC2013-3} and for useful references; to Prof. S.~Radenovi\'c for his comments on the metrization of \mbox{$2$-generalized} metric spaces. 

The authors also acknowledge members of Dong Thap Group of Mathematical Analysis and Applications for their discussions.

\end{document}